\documentclass{amsart}

\usepackage{amsmath,amssymb, amsfonts}
\usepackage{hyperref}
\usepackage[latin1,utf8]{inputenc}
\usepackage[T1]{fontenc}

%\usepackage[italian,main=english]{babel}
% ^^c3^^a4 in text umlau german char

%\textheight      230.0mm
%\textwidth       160.0mm
%\oddsidemargin     0.0mm
%\evensidemargin    0.0mm
%\topmargin       -15.0pt
%\topsep            0.0pt
%\arraycolsep       2.0pt
%\parskip           2.0pt

\newcommand{\um}{^^c3^^a4}
\newcommand{\LieDerivative}{\mathcal L_\xi}

\newcommand{\manifold}{\mathcal }

\newtheorem{theorem}{Theorem}
\theoremstyle{plain}

\newtheorem{corollary}{Corollary}

\newtheorem{example}{Example}

\newtheorem{proposition}{Proposition}
\newtheorem{remark}{Remark}

\numberwithin{equation}{section}

\title{Classification results for three-dimensional (para)contact metric and
 almost (para)cosymplectic $(\kappa,\mu)$-spaces.}
\author{Piotr Dacko}
\subjclass[2010]{53C15}
\begin{document}
\begin{abstract}
 It is provided an overview of existed results concerning classification of 
contact metric, almost cosymplectic and almost Kenmotsu $(\kappa,\mu)$-manifolds.
In the case of dimension three  it is described in full details structure 
 of contact metric or almost cosymplectic $(\kappa,\mu)$-spaces. The second part of the 
paper addresses three-dimensional paracontact metric and almost paracosymplectic $(\kappa,\mu)$-spaces. 
There is obtained local classification of paracontact metric $(\kappa,\mu)$-spaces,  and
 almost paracosymplectic $(\kappa,\mu)$-spaces, for every possible value of $\kappa$..  
\end{abstract}
\maketitle

\section{Introduction}   

In this paper there are studied almost contact metric 
and almost paracontact metric three-dimensional $(\kappa,\mu)$-manifolds. 
In particular  contact metric manifold or almost cosymplectic 
 $(\kappa,\mu)$-space can be realized as three-dimensional unimodular Lie group equipped with left-invariant 
almost contact metric structure. As these manifolds are already classified the paper is an overview of 
existing result in case of dimension three. Novelty of our approach is, that there is created on $\mathbb R^3$, 
smooth one-parameter family of almost contact metric structures, such that for particular
value of parameter $\mathbb R^3$ turns into contact metric $(\kappa,\mu)$-space of 
almost cosymplectic $(\kappa,\mu)$-space. From other hand  every such 
manifold is locally isometric to unique simply connected connected Lie group equipped with left-invariant 
almost contact metric structure.   

The case of paracontact metric or almost paracosymplectic $(\kappa,\mu)$-spaces 
is more challenging. Still up to the author knowledge there is no 
full classification of such manifolds. For the literature on the subject see \cite{CPM}, \cite{CPM:DiTer}, \cite{CPM:Er:Mur}.

The reason to provide detailed study of three-dimensional manifolds is connected to 
possible decomposition theorem. Idea is that every higher dimensional $(\kappa,\mu)$-spaces,
is constructed from three dimensional $(\kappa,\mu)$-space. Conjecturing there is reverse 
decomposition classification of three-dimensional $(\kappa,\mu)$-spaces is enough to provide 
local classification in all dimensions.

As almost contact metric and almost paracontact metric manifolds share some 
similarities it seems to be interesting to mix  these classes. In the sense to consider 
pseudo-Riemannian manifolds with $\phi$-4 structure: 
$\phi^4 = Id -\eta\otimes \xi$.  Such manifold is equipped with 
corresponding fundamental form and usual classes can be defined:
contact metric with pseudo-metric,  almost cosymplectic with 
pseudo-metric, etc.   For example we can equip odd-dimensional 
Lorentzian manifold with structure of contact metric manifold with 
Lorentzian metric.

\section{Preliminaries}
All manifolds considered in the paper are smooth, connected without boundary. Tensor 
fields on smooth manifold are assumed to be smooth. If not otherwise stated letters 
$X$, $Y$, $Z$, ... denote vector fields.

\subsection{Almost contact metric manifolds}
  Let $\mathcal M$ be $(2n+1)$-dimensional manifold,  
  $n \geqslant 1$.
  Almost contact metric structure is a quadruple of tensor fields 
  $(\phi, \xi, \eta, g)$, where $\phi$ is $(1,1)$-tensor field, 
  $\xi$ is a characteristic (or Reeb) vector field, 
  $\eta$ a chracteristic 1-form and $g$ - a Riemannian metric. By definition 
  \begin{align}
  & \phi^2 = -Id +\eta\otimes \xi, \quad \eta(\xi) =1, \\
 & g(\phi X, \phi Y) = g(X,Y)- \eta(X)\eta(Y). 
  \end{align} 
     Tensor field $\Phi(X,Y)= g (X,\phi Y)$ is 
  skew-symmetric, $\Phi(X,Y)+\Phi(Y,X)=0$. It determines a 2-form on 
  $\mathcal M$. 
   called fundamental form. From definition of $\Phi$, there is
  $\eta\wedge \Phi^n \neq 0$,
  at every point of $\mathcal M$. 
  Manifold equipped 
  with some fixed 
  almost contact metric structure is called almost contact metric 
  manifold. 
  
  Denote by $N_S$ Nijenhuis torsion of a $(1,1)$-tensor field $S$
\begin{equation*}
N_S(X,Y)= S^2[X,Y]+[SX,SY]-S([SX,Y]+[X,SY]).
\end{equation*} 
 Almost contact metric manifold $\mathcal M$
  is called normal if $N_\phi +2\,d\eta \otimes \xi =0$. Normality is equivalent to the 
  existence of complex structure on the product 
$\widetilde{ \mathcal M} = \mathcal M \times \mathbb \mathbb S^1$, 
with the circle. 

Almost contact metric  manifold $\mathcal M$ is 
  called contact metric if $d\eta = \Phi$, almost cosymplectic (or 
  almost coK\um hler) if $d\eta=0$, $d\Phi=0$,  almost Kenmotsu 
  if $d\eta=0$, $d\Phi = 2\eta\wedge\Phi$. Assuming normality
  we obtain Sasakian (contact metric and normal), cosymplectic 
  (or coK\um hler) and Kenmotsu manifolds. Almost 
  contact metric manifold with maximal isometry group 
  is locally isometric either to Sasakian manifold 
  of constant sectional curvature $c=+1$, 
  Kenmotsu manifold of constant sectional curvature $c=-1$, or 
  cosymplectic manifold of constant sectional curvature $c=0$.  
  General literature on the subject can be found eg. in 
  \cite{Blair},  \cite{CPM:DeNi:Yud}, \cite{CG2}, \cite{Dileo}, 
  \cite{GY}, \cite{LL}, \cite{O1}. 
  
Let $\nabla$ denote Levi-Civita connection of the metric, 
    $R(X,Y) = [\nabla_X,\nabla_Y]Z - \nabla_{[X,Y]}Z$, 
  curvature of $\nabla$.
     Define $h=\frac{1}{2}\mathcal L_\xi\phi$. Let $\kappa$, 
     $\mu$,  be real constants. Almost contact metric manifold  
     is called 
     $(\kappa,\mu)$-space if  
\begin{align}
     \label{km:nul:c}
 &    R(X,Y)\xi  =  \kappa(\eta(Y)X-\eta(X)Y) +\mu (\eta(Y)hX -\eta(X)hY).
 \end{align}
 Note if $h=0$ 
 it is not possible to determine $\mu$. But condition is still formally 
 valid for any possible value of  $\mu$.  Ambiguity also arrives if 
 \begin{equation}
 \label{k:nul:c}
R(X,Y)\xi = \kappa (\eta(Y)X-\eta(X)Y),
\end{equation}
for example in case $\mu =0$. 

We denote $\mathcal D = \{ \eta = 0 \}$, kernel distribution of characteristic form. $\mathcal D$-homothety  
is deformation of almost contact metric structure, $\alpha \in\mathbb R^+$ 
\begin{align*}
& \phi \mapsto \phi'=\phi, \quad \xi \mapsto \xi'=\alpha^{-1}\xi, \quad 
\eta \mapsto \eta' = \alpha \eta, \\
& g \mapsto g' = \alpha g+ \alpha(\alpha-1)\eta\otimes \eta.
\end{align*}

 \begin{theorem}[Blair, Koufogiorgos, Papantoniou, 1995, \cite{BKP} ] Let $\mathcal M$ be contact metric 
 $(\kappa, \mu)$-manifold. Then $\kappa \leqslant 1$. The following relations 
 hold
 \begin{align}
\label{cm:nab:fi}
& (\nabla_X\phi)Y = g(X,Y+hY)\xi -\eta(Y)(X+hX),  \\
\label{cm:nab:h}
& (\nabla_Xh)Y = ((1-\kappa)g(X,\phi Y)-g(X,\phi h Y))\xi -\\ 
& \qquad  \eta(Y)((1-\kappa)\phi X + \phi h X) - \mu\eta(X)\phi h Y.\nonumber 
\end{align}
 \end{theorem}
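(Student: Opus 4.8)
The plan is to derive both identities from the nullity condition \eqref{km:nul:c} together with the standard structural identities of a contact metric manifold: $h$ is $g$-symmetric and trace-free with $h\xi=0$, $\eta\circ h=0$, $\phi h+h\phi=0$, and $\nabla_X\xi=-\phi X-\phi hX$. I would first settle the bound $\kappa\leqslant 1$. Putting $Y=\xi$ in \eqref{km:nul:c} and using $h\xi=0$ gives for the Jacobi operator $lX:=R(X,\xi)\xi$ the expression $l=-\kappa\phi^2+\mu h$. Substituting this into the well-known contact-metric identity
\[
\phi\,l\,\phi - l = 2(h^2+\phi^2)
\]
and simplifying with $\phi^4=-\phi^2$ and $\phi h\phi = -\phi^2 h = h$ (the last using $\eta\circ h=0$) yields $h^2=(\kappa-1)\phi^2$. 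Since $\phi^2=-Id+\eta\otimes\xi$ restricts to $-Id$ on $\mathcal D$, this reads $h^2=(1-\kappa)\,Id$ on $\mathcal D$; as $h$ is symmetric, $g(h^2X,X)=|hX|^2\geqslant 0$, so $(1-\kappa)|X|^2\geqslant 0$ for $X\in\mathcal D$, forcing $\kappa\leqslant 1$ (with equality exactly when $h\equiv 0$).

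Next I would prove the formula for $\nabla\phi$. On any contact metric manifold one has $\nabla_\xi\phi=0$, and the identity $\nabla_X\xi=-\phi X-\phi hX$ pins down both $(\nabla_X\phi)\xi$ and $(\nabla_\xi\phi)Y$; a direct check shows the claimed right-hand side reproduces these, so \eqref{cm:nab:fi} holds automatically whenever $X$ or $Y$ equals $\xi$, and the content is the $\mathcal D$-part. For that I would invoke the general almost-contact-metric expansion of $2g((\nabla_X\phi)Y,Z)$ in terms of $d\Phi$, the Nijenhuis tensors $N^{(1)},N^{(2)}$ and $d\eta$. For a contact metric structure $d\Phi=0$ and $N^{(2)}=0$, so the expansion collapses to a term in $g(N^{(1)}(Y,Z),\phi X)$ together with explicit $\eta$-terms; the nullity condition controls $N^{(1)}$ through $h$ (here $h^2=(\kappa-1)\phi^2$ enters again), and inserting this and contracting returns $g(X+hX,Y)\xi-\eta(Y)(X+hX)$, which equals the stated right-hand side after rewriting $g(X+hX,Y)=g(X,Y+hY)$ by symmetry of $h$.

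The genuine obstacle is the formula \eqref{cm:nab:h} for $\nabla h$. The directional value $X=\xi$ comes for free from the general contact-metric relation $\nabla_\xi h=\phi-\phi l-\phi h^2$: substituting $l=-\kappa\phi^2+\mu h$ and $h^2=(\kappa-1)\phi^2$ collapses all $\phi$-terms and leaves $\nabla_\xi h=-\mu\phi h$, precisely the $\eta(X)$-term on the right of \eqref{cm:nab:h}. For general $X$ I would differentiate \eqref{km:nul:c} covariantly and feed the result into the second Bianchi identity
\[
(\nabla_X R)(Y,Z)\xi + (\nabla_Y R)(Z,X)\xi + (\nabla_Z R)(X,Y)\xi = 0,
\]
expanding each term as $(\nabla_X R)(Y,Z)\xi=\nabla_X(R(Y,Z)\xi)-R(\nabla_XY,Z)\xi-R(Y,\nabla_XZ)\xi-R(Y,Z)\nabla_X\xi$ via \eqref{km:nul:c}, the identity $\nabla_X\xi=-\phi X-\phi hX$, and the $\nabla\phi$-formula just established. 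Taking the $\mathcal D$-component and symmetrizing then isolates $(\nabla_Xh)Y$. The hard part is purely organizational: the Bianchi sum produces many terms, and one must repeatedly reduce with $h^2=(\kappa-1)\phi^2$, $\phi h=-h\phi$ and $\eta\circ h=0$ to collapse them onto the three surviving contributions in \eqref{cm:nab:h}. A convenient cross-check throughout is to contract with $\xi$ and with vectors in the $\pm\sqrt{1-\kappa}$ eigenspaces of $h$ on $\mathcal D$, which must return the already-verified special cases.
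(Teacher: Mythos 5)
The paper itself offers no proof of this theorem --- it is quoted verbatim as a known result of Blair--Koufogiorgos--Papantoniou \cite{BKP} --- so your attempt has to stand on its own. Its first part does: deriving $l=-\kappa\phi^2+\mu h$ from \eqref{km:nul:c}, feeding it into $\phi l\phi-l=2(h^2+\phi^2)$ to get $h^2=(\kappa-1)\phi^2$, and concluding $\kappa\leqslant 1$ from positive semi-definiteness of $h^2$ is exactly the standard (and the original) argument; your special-case checks ($X=\xi$ or $Y=\xi$ in \eqref{cm:nab:fi}, and $\nabla_\xi h=-\mu\phi h$ from $\nabla_\xi h=\phi-\phi l-\phi h^2$) are also correct.

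The other two parts have genuine gaps, and they are the same gap. For \eqref{cm:nab:fi}, the appeal to the general expansion of $2g((\nabla_X\phi)Y,Z)$ is circular: $N^{(1)}$ is itself built from first derivatives of $\phi$, namely $[\phi,\phi](Y,Z)=(\nabla_{\phi Y}\phi)Z-(\nabla_{\phi Z}\phi)Y-\phi(\nabla_Y\phi)Z+\phi(\nabla_Z\phi)Y$, so ``inserting'' its value presupposes the very tensor you are computing. There is no general formula expressing $N^{(1)}$ through $h$; the content of \eqref{cm:nab:fi} is precisely (by Tanno's criterion) that $N^{(1)}$ vanishes on $\mathcal D\times\mathcal D$, i.e.\ CR-integrability, and deriving \emph{that} from the nullity condition is the hard part of \cite{BKP}: it requires splitting $\mathcal D$ into the $\pm\sqrt{1-\kappa}$ eigendistributions of $h$ and computing $\nabla$ on eigenvector fields, with Olszak's identity $(\nabla_X\phi)Y+(\nabla_{\phi X}\phi)\phi Y=2g(X,Y)\xi-\eta(Y)(X+hX+\eta(X)\xi)$ as an indispensable extra input. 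Your sketch contains no substitute for this step. For \eqref{cm:nab:h}, the second Bianchi identity cannot deliver what you ask of it. Differentiating \eqref{km:nul:c} and summing cyclically (or, more simply, equating \eqref{km:nul:c} with $R(X,Y)\xi$ computed from $\nabla\xi=-\phi-\phi h$) produces only the skew part $(\nabla_Xh)Y-(\nabla_Yh)X$; since $g((\nabla_Xh)Y,Z)$ is symmetric in $(Y,Z)$, it is determined by that skew part only up to an arbitrary totally symmetric tensor, and once $\nabla_\xi h$ is known this ambiguity sits exactly in the components with all of $X,Y,Z\in\mathcal D$ --- which are the components \eqref{cm:nab:h} asserts to vanish. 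So ``taking the $\mathcal D$-component and symmetrizing'' annihilates the Bianchi information rather than isolating $(\nabla_Xh)Y$; worse, the cyclic sum contains the terms $R(Y,Z)\nabla_X\xi$, curvature on three horizontal arguments, which \eqref{km:nul:c} does not determine, so even the skew part must be extracted with one argument equal to $\xi$. In both places the missing idea is the eigendistribution analysis of $h$; it is the core of the original proof, not an ``organizational'' matter.
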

For contact metric non-Sasakian $(\kappa,\mu)$-space $\mathcal M$, let define 
\begin{equation}
I_{\mathcal M} = (1-\mu/2)/\sqrt{1-\kappa},
\end{equation}
$I_{\mathcal{M}}$ is called Boeckx invariant. 
 \begin{theorem}[Boeckx, 2000, \cite{Boe}] Let $\mathcal M_i$, $i=1,2$,  be 
two non-Sasakian $(\kappa_i,\mu_i)$-spaces of the same dimension. Then 
$I_{\mathcal{M}_1}=I_{\mathcal{M}_2}$ if and only if, up to $\mathcal{D}$-homothetic 
transformation, the two spaces are locally isometric as contact metric spaces.
In particular, if both spaces are simply connected and complete, they are globally 
isometric up to $\mathcal{D}$-homothetic transformation.  
\end{theorem}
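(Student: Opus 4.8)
The plan is to separate the two implications and to let the behaviour of $\kappa$ and $\mu$ under a $\mathcal{D}$-homothety be the organising principle, reserving the real work for showing that the pair $(\kappa,\mu)$ determines the local contact metric geometry. \textbf{The necessary direction is the soft one.} A local isometry of contact metric spaces carries $(\phi,\xi,\eta,g)$ to $(\phi,\xi,\eta,g)$, hence carries $h=\tfrac12\mathcal L_\xi\phi$ and $R$, and therefore the constants $\kappa,\mu$ appearing in \eqref{km:nul:c}, to their counterparts; so it suffices to check that $I_{\mathcal M}$ is a $\mathcal{D}$-homothetic invariant. First I would record the transformation rules: from $\phi'=\phi$, $\xi'=\alpha^{-1}\xi$, $\eta'=\alpha\eta$ one gets $h'=\tfrac12\mathcal L_{\xi'}\phi'=\alpha^{-1}h$, and feeding the primed structure into \eqref{km:nul:c} gives
\[
\kappa'=\frac{\kappa+\alpha^2-1}{\alpha^2},\qquad \mu'=\frac{\mu+2\alpha-2}{\alpha}.
\]
A one-line computation then yields $\sqrt{1-\kappa'}=\alpha^{-1}\sqrt{1-\kappa}$ and $1-\mu'/2=\alpha^{-1}(1-\mu/2)$, so the quotient $I_{\mathcal M}=(1-\mu/2)/\sqrt{1-\kappa}$ is unchanged, settling the ``only if'' part together with the isometry invariance above.

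For the sufficiency direction I would first reduce to the case of equal $(\kappa,\mu)$. The level set $\{(\kappa,\mu):\kappa<1,\ (1-\mu/2)/\sqrt{1-\kappa}=I\}$ is the curve $\mu=2-2I\sqrt{1-\kappa}$, and from the first transformation formula $\kappa'=1-(1-\kappa)/\alpha^2$ sweeps out all values in $(-\infty,1)$ as $\alpha$ ranges over $\mathbb R^+$, with $\mu'$ then forced onto the same level curve by the invariance just proved. Hence the $\mathcal{D}$-homothetic orbit of a non-Sasakian pair $(\kappa,\mu)$ is \emph{exactly} its $I$-level curve. Given $\mathcal M_1,\mathcal M_2$ with $I_{\mathcal M_1}=I_{\mathcal M_2}=I$, I would apply a suitable $\mathcal{D}$-homothety to $\mathcal M_2$ to move $(\kappa_2,\mu_2)$ onto $(\kappa_1,\mu_1)$, after which both spaces share the same constants.

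It then remains to prove the \textbf{rigidity statement}, which I expect to be the main obstacle: two non-Sasakian $(\kappa,\mu)$-spaces of the same dimension with the \emph{same} $\kappa$ and $\mu$ are locally isometric as contact metric spaces. Here I would exploit that on a non-Sasakian space $h^2=(\kappa-1)\phi^2$, so $\mathcal D=\{\eta=0\}$ splits orthogonally into the $\pm\lambda$-eigendistributions of $h$ with $\lambda=\sqrt{1-\kappa}$. Choosing a local orthonormal frame $\{\xi,e_1,\dots,e_n,\phi e_1,\dots,\phi e_n\}$ adapted to this splitting, the formulas \eqref{cm:nab:fi} and \eqref{cm:nab:h}, together with the curvature tensor derived from \eqref{km:nul:c} in the Theorem of Blair--Koufogiorgos--Papantoniou, express all Levi-Civita connection coefficients, and hence all covariant derivatives of $R$, as fixed universal polynomials in $\kappa$, $\mu$ and $\lambda$. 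This makes the space curvature-homogeneous to all orders, so by Singer's criterion it is locally homogeneous with a model Lie algebra depending only on $(\kappa,\mu)$; a Cartan--Ambrose--Hicks type reconstruction from the common adapted frame then produces the desired local isometry intertwining the structure tensors. Combined with the reduction above, this yields the ``if'' direction. Finally, for complete simply connected $\mathcal M_1,\mathcal M_2$ I would upgrade the local isometry to a global one: each space, being locally homogeneous, is locally isometric to a complete simply connected homogeneous model, and a local isometry between complete simply connected locally homogeneous pseudo-metric spaces extends globally by analytic continuation along paths, giving the stated global isometry up to $\mathcal{D}$-homothety.
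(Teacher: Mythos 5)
The paper itself offers no proof of this statement: it is quoted as Boeckx's theorem and used as a black box, so your proposal can only be measured against Boeckx's original argument. Its architecture --- invariance of $I_{\mathcal M}$ under $\mathcal D$-homothety and contact metric isometry, reduction of equal invariants to equal constants, then rigidity for spaces with identical $(\kappa,\mu)$ --- is exactly the architecture of that argument, and your first two steps are correct and essentially complete: the transformation rules $\kappa'=(\kappa+\alpha^2-1)/\alpha^2$, $\mu'=(\mu+2\alpha-2)/\alpha$ are the standard ones (though you assert rather than derive them; checking that the deformed metric $g'=\alpha g+\alpha(\alpha-1)\eta\otimes\eta$ again satisfies \eqref{km:nul:c} with these constants is a genuine computation), and the identification of the $\mathcal D$-homothety orbit of $(\kappa,\mu)$ with the level curve of $I$ is the right reduction.

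The genuine gap is inside the rigidity step, in two places. First, your mechanism rests on the full curvature tensor $R(X,Y)Z$ being a universal expression in $g,\eta,\xi,\phi,h$ with coefficients depending only on $\kappa,\mu$. This does not follow cheaply from what you invoke: the nullity condition \eqref{km:nul:c} prescribes only $R(\cdot,\cdot)\xi$, and differentiating \eqref{cm:nab:fi}, \eqref{cm:nab:h} via the Ricci identity yields only the commutators $[R(X,Y),\phi]$ and $[R(X,Y),h]$, which determine the operator $R(X,Y)$ merely up to the commutant of the algebra generated by $\phi,h$ on $\mathcal D$ --- a nontrivial space (of dimension $\geq n^2$), so extra input (Bianchi identities and the symmetries of $R$ as a $(0,4)$-tensor) is indispensable. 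The complete determination of $R$ is a substantive lemma of Blair--Koufogiorgos--Papantoniou and Boeckx and is precisely the engine of the classification; a proof cannot treat it as already ``derived from'' the quoted statements. Second, your claim that all Levi-Civita connection coefficients in an adapted frame are ``fixed universal polynomials in $\kappa,\mu,\lambda$'' is false whenever $n\geq 2$: the adapted frame has pointwise $O(n)$ gauge freedom (rotations of the $+\lambda$-eigendistribution of $h$), and connection coefficients pick up derivative-of-gauge terms. What your argument actually needs, and what is true once universality of $R$ is established, is the tensorial statement: by Leibniz, together with $\nabla g=0$ and the universal formulas for $\nabla\xi$, $\nabla\phi$, $\nabla h$, every $\nabla^kR$ is a universal expression in the structure tensors, after which the Singer/Cartan--Ambrose--Hicks machinery applies as you describe. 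Finally, you should justify that the resulting Riemannian isometry is an isometry \emph{of contact metric structures}: it suffices to observe that it carries $\xi$ to $\xi$ (e.g.\ because $\xi$ is characterized up to sign by the curvature on a non-Sasakian $(\kappa,\mu)$-space), whence $\eta$, then $\phi$ via $\Phi=d\eta$ and nondegeneracy of $g$, then $h$, are preserved automatically.
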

 
 The cited result  is base for classification of non-Sasakian contact metric 
  $(\kappa,\mu)$-manifolds. 
 It is enough to provide an example  of  manifold $\mathcal M$, 
 for every allowable value $I$
 of Boeckx invariant, such that $I_{\mathcal M} = I$.

 For almost cosymplectic manifold 
 distribution $\{ \eta = 0 \}$ is  completely integrable. Therefore it determines canonical foliation
of the manifold. 
 Let $\mathcal F$ denote a leaf  passing through some point $\in \mathcal M$. 
 Then $\mathcal F$ inherits structure of almost K\um hler manifold.  Assuming 
 structure is K\um hler for every leaf manifold is called almost cosymplectic 
 with K\um hler leaves.
 
 \begin{theorem}[Olszak, 1987, \cite{O3}] Define $A = -\nabla\xi$.  Almost cosymplectic 
 manifold has K\um hlerian leaves if and only if  
 \begin{equation}
(\nabla_X\phi)Y= -g(\phi AX, Y) + \eta(Y)\phi A  X.
\end{equation}
\end{theorem}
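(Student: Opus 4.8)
The plan is to pass to a single leaf $\mathcal{F}$ of the canonical foliation $\mathcal{D} = \{\eta=0\}$ and to compare its intrinsic Levi-Civita connection $\nabla^{\mathcal F}$ with the ambient connection $\nabla$ through the Gauss formula. First I would record the two structural facts that hold on any almost cosymplectic manifold. From $1=\eta(\xi)=g(\xi,\xi)$ one gets $g(AX,\xi)=0$, and since $(\nabla_X\eta)(Y)=g(\nabla_X\xi,Y)=-g(AX,Y)$, the condition $d\eta=0$ forces $A=-\nabla\xi$ to be $g$-symmetric; together these give $A\xi=0$ and $AX\in\mathcal{D}$ for every $X$. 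Because $g$ restricts to a Riemannian metric on $\mathcal{D}$ and $J:=\phi|_{\mathcal D}$ satisfies $J^2=-\mathrm{Id}$ and $g(JX,JY)=g(X,Y)$ on $\mathcal{D}$, each leaf is almost Hermitian, and as $d\Phi=0$ its fundamental form is closed, so each leaf is almost K\"ahler. Consequently $\mathcal{F}$ is K\"ahler exactly when $\nabla^{\mathcal F}J=0$.

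Next I would insert the Gauss formula. For $X,Y\in\mathcal{D}$ the unit normal is $\xi$, so $\nabla_XY=\nabla^{\mathcal F}_XY+g(AX,Y)\xi$, the normal part being $g(\nabla_XY,\xi)\xi=-g(Y,\nabla_X\xi)\xi=g(AX,Y)\xi$. Differentiating $\phi$, and using $\phi\xi=0$ together with the skew-symmetry of $\phi$ (so $g(AX,\phi Y)=-g(\phi AX,Y)$), yields for $X,Y\in\mathcal{D}$
\begin{equation*}
(\nabla^{\mathcal F}_X\phi)Y=(\nabla_X\phi)Y+g(\phi AX,Y)\xi .
\end{equation*}
Hence the leaf through each point is K\"ahler if and only if $(\nabla_X\phi)Y=-g(\phi AX,Y)\xi$ for all $X,Y\in\mathcal{D}$, which is precisely the asserted formula restricted to $\mathcal{D}$, since $\eta(Y)=0$ there. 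This already settles the converse: restricting the global formula to $\mathcal{D}$ returns $(\nabla^{\mathcal F}_X\phi)Y=0$, so all leaves are K\"ahler.

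The remaining, and main, task is to upgrade this $\mathcal{D}$-valued identity to the full formula on $T\mathcal{M}$, that is, to control the directions involving $\xi$. Two ambient identities do the job. First, $(\nabla_X\phi)\xi=\nabla_X(\phi\xi)-\phi\nabla_X\xi=\phi AX$ holds on any almost contact metric manifold, which reproduces the term $\eta(Y)\phi AX$ after decomposing $Y$. Second, I would prove $\nabla_\xi\phi=0$ on every almost cosymplectic manifold: the general expression of $2g((\nabla_X\phi)Y,Z)$ in terms of $d\Phi$, $d\eta$ and the Nijenhuis torsion $N_\phi$ collapses, under $d\Phi=0$ and $d\eta=0$, to a single term proportional to $g(N_\phi(Y,Z),\phi X)$, which vanishes identically when $X=\xi$ because $\phi\xi=0$. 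I expect this step to be the principal obstacle, since the leaf hypothesis says nothing about $\xi$-derivatives and that information must be extracted from the almost cosymplectic identities themselves.

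Finally I would assemble the pieces. Writing $X=X'+\eta(X)\xi$ and $Y=Y'+\eta(Y)\xi$ with $X',Y'\in\mathcal{D}$ and expanding $(\nabla_X\phi)Y$ by bilinearity produces four terms; using $A\xi=0$ (so $AX'=AX$ and $g(\phi AX,\xi)=0$), $\nabla_\xi\phi=0$, $(\nabla_{X'}\phi)\xi=\phi AX$, and the leaf condition $(\nabla_{X'}\phi)Y'=-g(\phi AX,Y)\xi$, they collapse to exactly $-g(\phi AX,Y)\xi+\eta(Y)\phi AX$. This establishes the forward implication and completes the equivalence.
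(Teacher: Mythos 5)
The paper itself offers no proof of this statement: it is quoted as a known theorem of Olszak \cite{O3}, so there is no internal argument to compare yours against; judged on its own, your proof is correct and complete. The preliminary facts are right ($g(AX,\xi)=0$ from $g(\xi,\xi)=1$; symmetry of $A$ from $d\eta=0$; hence $A\xi=0$), and the Gauss-formula computation
\begin{equation*}
(\nabla^{\mathcal F}_X\phi)Y=(\nabla_X\phi)Y+g(\phi AX,Y)\xi, \qquad X,Y\in\mathcal D,
\end{equation*}
is exactly the right way to identify ``leaves K\"ahler'' with the restricted identity $(\nabla_X\phi)Y=-g(\phi AX,Y)\xi$ on $\mathcal D$, using that an almost K\"ahler structure is K\"ahler precisely when its almost complex structure is parallel. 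You also correctly isolated the real content of the forward implication, namely $\nabla_\xi\phi=0$: under $d\eta=0$, $d\Phi=0$ the Blair-type formula for $2g((\nabla_X\phi)Y,Z)$ collapses to the single term involving $g(N_\phi(Y,Z),\phi X)$, which vanishes at $X=\xi$ because $\phi\xi=0$; without this step the leaf hypothesis alone says nothing about $\xi$-derivatives, so flagging it as the principal obstacle was accurate. The final decomposition $X=X'+\eta(X)\xi$, $Y=Y'+\eta(Y)\xi$, combined with $(\nabla_X\phi)\xi=\phi AX$ and $A\xi=0$, then assembles the full identity. Two remarks. First, you tacitly corrected a typo in the statement: as printed, $-g(\phi AX,Y)$ is a scalar, and the formula only makes sense as $-g(\phi AX,Y)\xi+\eta(Y)\phi AX$; your reading is the intended one. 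Second, your route differs from Olszak's original argument, which proceeds through integrability of the induced almost complex structure (vanishing of the Nijenhuis torsion along the leaves) and the same general formula for $\nabla\phi$, rather than through submanifold geometry; your version is arguably more transparent, since the second fundamental form of a leaf is literally $g(AX,Y)\xi$, which makes the correction term $g(\phi AX,Y)\xi$ appear inevitably rather than by computation.
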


 \begin{theorem}[Dacko, Olszak, 2005, \cite{DO2}]
 Let $\mathcal M$ be non-cosymplectic almost 
cosymplectic $(\kappa,\mu)$-manifold. Then $\kappa \leqslant 0$. 
If $\kappa = 0$, $\mathcal M$ is locally isometric 
to product of  real line and almost K\um hler manifold. For $\kappa <0$,
 $\mathcal M$ has K\um hler leaves and each leaf is locally flat K\um hler manifold. 
There is following identity
\begin{gather}
\label{ac:lfi:lh:lfih}
\mathcal L_\xi \phi  =  2 h, \quad \mathcal L_\xi h = -2\kappa \phi -\mu \phi h, \quad 
\mathcal L_\xi (\phi h ) = \mu h,
\end{gather}
\end{theorem}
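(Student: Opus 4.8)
The plan is to extract everything from two inputs: the kinematic identities of almost cosymplectic geometry and the defining curvature relation \eqref{km:nul:c}. First I would record the standard structural facts for $h=\tfrac12\mathcal L_\xi\phi$ on an almost cosymplectic manifold: $h$ is $g$-symmetric and trace-free, $h\xi=0$ and $\eta\circ h=0$, and $\phi h=-h\phi$ (so that $h^2$ commutes with $\phi$ and $\phi^2 h=-h$). Since $d\eta=0$, the operator $\nabla\xi$ is symmetric and annihilates $\xi$; combined with the standard identity $\nabla_\xi\phi=0$ this gives the basic formula $\nabla_X\xi=-\phi hX$, equivalently $A=-\nabla\xi=\phi h$ in the notation of Olszak's theorem. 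The first identity $\mathcal L_\xi\phi=2h$ is then nothing but the definition of $h$, so it needs no argument beyond being recorded.

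For the curvature I would substitute $\nabla_X\xi=-\phi hX$ into $R(X,Y)\xi$. Torsion-freeness collapses the Christoffel terms and yields $R(X,Y)\xi=(\nabla_Y(\phi h))X-(\nabla_X(\phi h))Y$; setting $Y=\xi$ and using $(\phi h)\xi=0$, $(\nabla_X(\phi h))\xi=(\phi h)^2X=h^2X$ and $\nabla_\xi(\phi h)=\phi\,\nabla_\xi h$, one obtains $R(X,\xi)\xi=\phi(\nabla_\xi h)X-h^2X$. Comparing with \eqref{km:nul:c} at $Y=\xi$ gives $\phi(\nabla_\xi h)X=h^2X+\kappa(X-\eta(X)\xi)+\mu hX$. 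Applying $\phi$ once more, and using $\phi^2=-\mathrm{Id}+\eta\otimes\xi$ together with $(\nabla_\xi h)\xi=0$ (differentiate $h\xi=0$ along $\xi$), solves for $\nabla_\xi h=-\kappa\phi-\mu\phi h-\phi h^2$. The crucial step is then to impose that $\nabla_\xi h$ be $g$-symmetric: since $\phi$ is skew-adjoint and $h^2\phi=\phi h^2$, symmetry forces $\phi(h^2+\kappa\,\mathrm{Id})=0$, hence $h^2=\kappa\phi^2$. Feeding this back gives the clean form $\nabla_\xi h=-\mu\phi h$.

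With $h^2=\kappa\phi^2$ in hand the remaining assertions follow quickly. On $\mathcal D$ this reads $h^2=-\kappa\,\mathrm{Id}$; as $h$ is symmetric, $h^2$ is positive semidefinite, so $-\kappa\ge 0$, i.e.\ $\kappa\le 0$. For the Lie-derivative identities I would use $(\mathcal L_\xi h)X=(\nabla_\xi h)X+2\phi h^2X$ (the correction terms coming from $\nabla_X\xi=-\phi hX$), which with $\nabla_\xi h=-\mu\phi h$ and $\phi h^2=\kappa\phi^3=-\kappa\phi$ gives $\mathcal L_\xi h=-2\kappa\phi-\mu\phi h$; then $\mathcal L_\xi(\phi h)=(\mathcal L_\xi\phi)h+\phi(\mathcal L_\xi h)=2h^2-2\kappa\phi^2-\mu\phi^2 h=\mu h$, again using $h^2=\kappa\phi^2$ and $\phi^2 h=-h$. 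This establishes \eqref{ac:lfi:lh:lfih}. If $\kappa=0$ then $h^2=0$ and symmetry forces $h=0$, so $\nabla\xi=0$: $\xi$ is parallel and de Rham's theorem splits $\mathcal M$ locally as $\mathbb R\times\mathcal F$, with $\mathcal F$ carrying the induced almost K\"ahler structure (its fundamental $2$-form is closed because $d\Phi=0$), which is the product case.

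The case $\kappa<0$ is the substantive one; here $h$ is invertible on $\mathcal D$ with eigenvalues $\pm\sqrt{-\kappa}$. To see the leaves are K\"ahler I would first derive the full covariant derivative of $\phi$ for almost cosymplectic $(\kappa,\mu)$-spaces, expecting $(\nabla_X\phi)Y=g(hX,Y)\xi-\eta(Y)hX$, and then check it against Olszak's criterion with $A=\phi h$ (noting $\phi A=-h$ on $\mathcal D$): this matches our $\nabla\phi$ with the K\"ahler-leaf condition, so each leaf is K\"ahler. The genuinely hard part is flatness of the leaves. For this one must pass beyond the $\xi$-direction and control $\nabla_X h$ and $\nabla_X\phi$ for $X\in\mathcal D$, assemble the full ambient curvature $R(X,Y)Z$ on $\mathcal D$, and combine it with the Gauss equation for $\mathcal F\hookrightarrow\mathcal M$ — whose second fundamental form is governed by $\nabla_X\xi=-\phi hX$ — to show the intrinsic curvature of the leaf vanishes. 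I expect this Gauss-equation computation, which needs the second-order tangential data $\nabla h$, to be the main obstacle: the $\xi$-direction identities derived above do not by themselves pin down these tangential derivatives, so an additional integrability argument, exploiting that $\mathcal M$ is a $(\kappa,\mu)$-space, is required to close it.
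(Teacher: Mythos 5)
The paper itself only quotes this theorem as a cited result of Dacko--Olszak \cite{DO2} and contains no proof of it, so your attempt has to be judged on its own terms rather than against an internal argument. The portion of your proof covering the identities and the sign of $\kappa$ is correct and complete: from the standard almost cosymplectic facts ($h$ symmetric, $h\phi=-\phi h$, $\nabla_\xi\phi=0$, $\nabla_X\xi=-\phi hX$) you correctly get $R(X,\xi)\xi=\phi(\nabla_\xi h)X-h^2X$, and comparing with \eqref{km:nul:c} and demanding that $\nabla_\xi h$ be $g$-symmetric (so that the $g$-skew part $-\kappa\phi-\phi h^2$ of your expression must vanish, $\phi h$ being symmetric and $\phi,\ \phi h^2$ skew) does force $h^2=\kappa\phi^2$, hence $\kappa\leqslant 0$ and $\nabla_\xi h=-\mu\phi h$; the passage to the Lie derivatives via $(\mathcal L_\xi h)X=(\nabla_\xi h)X+2\phi h^2X$ and the Leibniz rule for $\mathcal L_\xi(\phi h)$ is also correct, so \eqref{ac:lfi:lh:lfih} is established. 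The $\kappa=0$ case is fine as well: $h^2=0$ with $h$ symmetric gives $h=0$, so $\xi$ is parallel and the de Rham splitting yields the local product with an almost K\"ahler factor.

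The genuine gap is the case $\kappa<0$, which contains the two substantive geometric claims: that the leaves of $\{\eta=0\}$ are K\"ahler and that each leaf is flat. For the K\"ahler property your plan is essentially circular: the formula $(\nabla_X\phi)Y=g(hX,Y)\xi-\eta(Y)hX$ that you propose to ``derive, expecting'' is, via Olszak's theorem quoted in the paper (with $A=-\nabla\xi=\phi h$, so $\phi A=-h$), \emph{equivalent} to the K\"ahler-leaves condition; the subsequent ``check against Olszak's criterion'' is a tautology, and the derivation itself --- which is the entire content of the claim --- is never supplied. It also cannot follow from what you have proved: all your identities live in the $\xi$-direction (they determine $h^2$ and $\nabla_\xi h$ only), whereas the K\"ahler-leaves condition constrains $(\nabla_X\phi)Y$ for $X,Y\in\mathcal D$; to get it one must genuinely exploit $d\Phi=0$ through the general expression of $2g((\nabla_X\phi)Y,Z)$ in terms of $d\Phi$, $d\eta$ and the Nijenhuis torsion, together with consequences of the curvature hypothesis beyond its trace along $\xi$. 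The flatness of the leaves you explicitly leave open yourself, and it is not a minor remainder: the $(\kappa,\mu)$-condition prescribes only $R(\cdot,\cdot)\xi$, so the tangential curvature entering the Gauss equation must be extracted by a separate argument (in \cite{DO2}, \cite{DO3} this is done through further differentiation of the structure equations and explicit local models). Even in the three-dimensional setting of the present paper, where the K\"ahler claim is vacuous because two-dimensional almost K\"ahler manifolds are K\"ahler, the flatness statement still requires exactly the computation you defer. As it stands, your attempt proves $\kappa\leqslant 0$, the identities \eqref{ac:lfi:lh:lfih}, and the $\kappa=0$ splitting, but not the theorem's assertions for $\kappa<0$.
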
 
The theorem allows to classify, by analytic solution, 
 almost cosymplectic $(\kappa,\mu)$-manifolds 
in terms of so-called models. For every $\mu\in \mathbb R$ there is  almost cosymplectic 
$(-1,\mu)$-manifold - called model - and every other $(\kappa, \mu)$-manifold 
is locally isometric up to $\mathcal D$-homothety to particular model 
\cite{DO3}. The value $\frac{\mu}{\sqrt{-\kappa}}$, $\kappa < 0$ is 
$\mathcal D$-homothety invariant. We set 
$C_{\mathcal M } =\frac{-\mu/2}{\sqrt{-\kappa}}$, $\kappa < 0$ and 
call $C_{\mathcal M}$ \emph{Dacko-Olszak invariant} of almost
 cosymplectic $(\kappa,\mu)$-manifold. 

For almost Kenmotsu manifolds there are following basic results. 
\begin{theorem}[Dileo, Pastore, 2009] Let $\mathcal M$ be almost Kenmotsu 
$(\kappa,\mu)$-manifold. Then $\kappa  = -1$, $h=0$ and $\mathcal M$ 
is locally warped product  of an almost K\um hler manifold and open interval. 
If $\mathcal M$ is locally symmetric then $\mathcal M$ is locally isometric 
to the hyperbolic space $\mathbb H(-1)$ of constant sectional curvature $-1$.
\end{theorem}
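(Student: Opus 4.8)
The plan is to reduce the statement to three successive tasks: (i) the structural identities of an almost Kenmotsu manifold, (ii) the algebraic content of the nullity condition, and (iii) a rigidity argument pinning $\kappa=-1$, after which the geometric description follows. First I would record the basic identities. Differentiating the defining relations $d\eta=0$ and $d\Phi=2\eta\wedge\Phi$, and using $h=\tfrac12\mathcal{L}_\xi\phi$, one verifies that $h$ is symmetric, $h\xi=0$, $\operatorname{tr}h=0$ and $h\phi+\phi h=0$, and that in our conventions $\nabla_X\xi=X-\eta(X)\xi-\phi hX=-\phi^2X-\phi hX$. In particular $\nabla_\xi\xi=0$, so $\xi$ is geodesic, and since $d\eta=0$ the distribution $\mathcal{D}=\{\eta=0\}$ is integrable with leaves carrying an induced almost Hermitian structure.

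Next I would extract the algebra of the nullity condition. Using the expression for $\nabla\xi$ I compute the Jacobi operator $\ell X=R(X,\xi)\xi$; a direct computation valid on any almost Kenmotsu manifold gives the identity $\ell-\phi\ell\phi=2(\phi^2-h^2)$ on $\mathcal{D}$. Setting $Y=\xi$ in the $(\kappa,\mu)$-condition and using $h\xi=0$ yields $\ell=-\kappa\phi^2+\mu h$, so that $\phi\ell\phi-\ell=2\kappa\phi^2$ on $\mathcal{D}$; the $\mu$-terms cancel and comparison forces the purely algebraic relation $h^2=(\kappa+1)\phi^2$. Since $h$ is symmetric and $\phi^2=-\mathrm{Id}$ on $\mathcal{D}$, the operator $-(\kappa+1)\mathrm{Id}$ must be positive semidefinite, whence $\kappa\le-1$ and $h$ has eigenvalues $\pm\lambda$ on $\mathcal{D}$ with $\lambda=\sqrt{-(\kappa+1)}$, the eigendistributions being interchanged by $\phi$. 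Taking the trace recovers $\operatorname{Ric}(\xi,\xi)=2n\kappa$ consistently, but cannot by itself separate $\kappa$ from $\lambda$.

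The hard part will be upgrading $\kappa\le-1$ to $\kappa=-1$, equivalently $h=0$; this is exactly the rigidity distinguishing the almost Kenmotsu case from the contact metric one, where $\kappa<1$ genuinely occurs. The route I would take is to differentiate the algebraic relation $h^2=(\kappa+1)\phi^2$ (with $\kappa$ constant) and combine it with the almost Kenmotsu analogue of the contact-metric derivative formula for $\nabla h$ and with the second Bianchi identity, producing an evolution equation for $h$ along $\xi$ and across the eigendistributions $[\pm\lambda]$. Analysing the induced brackets and the integrability of $[\pm\lambda]$, the resulting overdetermined system forces $\lambda=0$; I expect this eigendistribution and Bianchi bookkeeping to be the main obstacle.

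Once $h=0$ we have $\nabla_X\xi=X-\eta(X)\xi$, so the leaves of $\mathcal{D}$ are totally umbilic with unit mean curvature and $\mathcal{M}$ is locally a warped product $(-\varepsilon,\varepsilon)\times_{e^{t}}N$ whose fibre $N$ inherits an almost Kähler structure from the restriction of $(\phi,g,\Phi)$ (the condition $d\Phi=2\eta\wedge\Phi$ restricts to $d\Phi_N=0$ on a leaf), which is the asserted description. Finally, assuming $\mathcal{M}$ locally symmetric, I would insert the warped-product curvature formulas into $\nabla R=0$: the mixed sectional curvatures already equal $-1$, and $\nabla R=0$ forces the fibre $N$ to be flat, so every sectional curvature equals $-1$ and $\mathcal{M}$ is locally isometric to $\mathbb{H}(-1)$.
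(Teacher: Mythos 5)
The paper itself gives no proof of this statement---it is quoted as a survey item from Dileo--Pastore (2009)---so your proposal has to be judged on its own merits. Most of it is sound: the structural identities in (i), the identity $\ell-\phi\ell\phi=2(\phi^2-h^2)$, its combination with $\ell=-\kappa\phi^2+\mu h$ to give $h^2=(\kappa+1)\phi^2$ and $\kappa\leqslant -1$, and the concluding paragraph (once $h=0$, the totally umbilical leaves of $\{\eta=0\}$, the local warped product $(-\varepsilon,\varepsilon)\times_{e^t}N$ with $N$ almost K\"ahler, and the constant-curvature conclusion under local symmetry) are all correct.

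The genuine gap is exactly the step you flag as ``the hard part'': you never prove $\kappa=-1$, $h=0$, which is the entire content of the theorem; you only announce a plan (second Bianchi identity, integrability of the eigendistributions $[\pm\lambda]$) and concede it is the expected obstacle. Moreover that plan is misdirected: no Bianchi identity and no eigendistribution analysis are needed, and the missing ingredient is the half of your own key identity that you discarded. On an almost Kenmotsu manifold the full Jacobi-operator formula reads
\begin{equation*}
\ell=\phi^2+2\phi h-h^2+\phi(\nabla_\xi h),
\end{equation*}
and comparing with $\ell=-\kappa\phi^2+\mu h$ splits, by symmetry under $\phi$, into a $\phi$-commuting part and a $\phi$-anticommuting part. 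The commuting part is $h^2=(\kappa+1)\phi^2$ (this is all that survives in $\ell-\phi\ell\phi$, which is why your identity alone cannot finish the job); the anticommuting part is $\nabla_\xi h=-2h-\mu\phi h$. Now differentiate $h^2=(\kappa+1)\phi^2$ along $\xi$: the right-hand side vanishes because $\nabla_\xi\phi=0$ and $\kappa$ is constant, while the left-hand side equals $(\nabla_\xi h)h+h(\nabla_\xi h)=-4h^2$, the $\mu$-terms cancelling since $h\phi h=-\phi h^2$. Hence $h^2=0$, so $h=0$ ($h$ is symmetric with respect to a Riemannian metric), and then $(\kappa+1)\phi^2=0$ forces $\kappa=-1$. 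The term $-2h$ in $\nabla_\xi h$ is precisely the rigidity mechanism distinguishing the almost Kenmotsu case from the almost cosymplectic one, where the analogous evolution equation \eqref{ac:lfi:lh:lfih} contains no such term and non-trivial $(\kappa,\mu)$-spaces with $h\neq 0$ do exist; your outline, as written, never isolates this term, so the overdetermined system you hope will force $\lambda=0$ is left entirely unexamined.
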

  \begin{theorem}[Dileo, Pastore, 2009 ]
  Let $\mathcal M$ be almost Kenmotsu 
manifold  such that $ h \neq 0$ and
\begin{equation}
\label{ak:km:nul:c}
R(X,Y)\xi =\kappa (\eta(Y)X-\eta(X)Y) + \mu( \eta(Y) h \phi X - \eta(X)h\phi Y), 
\end{equation}
then $\mathcal M$ is locally isomeric to warped products 
\begin{equation}
\mathbb H^{n+1}(\kappa -2\lambda) \times_f \mathbb R^n, \quad
B^{n+1}(\kappa+2\lambda)\times_{f'} \mathbb R^n, 
\end{equation}
where $H^{n+1}(\kappa -2\lambda)$ is the hyperbolic space of constant 
sectional curvature $k-2\lambda < -1$, $B^{n+1}(\kappa+2\lambda)$ 
is a space of constant sectional curvature $\kappa +2\lambda \leqslant 0$,
$f = c e^{(1-\lambda)t}$, $f' = c' e^{(1+\lambda)t}$, 
$\lambda =\sqrt{|1+\kappa|}$.  
\end{theorem}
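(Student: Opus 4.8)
The plan is to read off the splitting of $T\mathcal{M}$ induced by the operator $h'=h\phi$ and to recognise the metric as a (multiply) warped product. First I would record the structural identities of an almost Kenmotsu manifold: $h$ is symmetric, $h\xi=0$, $\operatorname{tr}h=0$, $\phi h+h\phi=0$, while $\nabla_\xi\xi=0$ and $\nabla_\xi\phi=0$. Putting $h'=h\phi=-\phi h$, the operator $h'$ is symmetric with $h'\xi=0$ and $\phi h'+h'\phi=0$, and the Reeb field satisfies $\nabla_X\xi=-\phi^2X+h'X$, so that $BX:=\nabla_X\xi=(\mathrm{Id}+h')X$ on $\mathcal{D}=\{\eta=0\}$ and $B\xi=0$. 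The integral curves of $\xi$ are unit-speed geodesics and, since $d\eta=0$, the distribution $\mathcal{D}$ is integrable.

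Next I would substitute $Y=\xi$ into the hypothesis \eqref{ak:km:nul:c}, which gives $R(X,\xi)\xi=\kappa(X-\eta(X)\xi)+\mu\,h'X$, and compare with the Jacobi-operator identity $R(X,\xi)\xi=-(\nabla_\xi B)X-B^2X$ (a consequence of $\nabla_\xi\xi=0$ obtained by expanding $R(X,\xi)\xi=\nabla_X\nabla_\xi\xi-\nabla_\xi\nabla_X\xi-\nabla_{[X,\xi]}\xi$ and using $\nabla_W\xi=BW$). Using $\nabla_\xi\phi=0$ this comparison yields $\nabla_\xi h'=(1+\kappa)\phi^2-(2+\mu)h'-(h')^2$ on $\mathcal{D}$. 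Restricting to $\mathcal{D}$ and conjugating by $\phi$, where $\phi h'\phi^{-1}=-h'$ and $\phi(\nabla_\xi h')\phi^{-1}=-\nabla_\xi h'$, and then adding and subtracting the resulting relation from the original, forces simultaneously $(h')^2=(1+\kappa)\phi^2$ and $\nabla_\xi h'=-(2+\mu)h'$. Since $h'\neq0$ is symmetric, $(h')^2\geqslant0$, while $\phi^2=-\mathrm{Id}$ on $\mathcal{D}$; hence $\kappa<-1$, the eigenvalues of $h'|_{\mathcal{D}}$ are the constants $\pm\lambda$ with $\lambda=\sqrt{-1-\kappa}$, and constancy of $\lambda$ combined with $\nabla_\xi h'=-(2+\mu)h'$ pins down $\mu=-2$ (so $\nabla_\xi h'=0$). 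Writing $\mathcal{D}'_{\pm\lambda}$ for the eigendistributions — each of rank $n$, since $\phi$ interchanges them — one has $\nabla_X\xi=(1\pm\lambda)X$ for $X\in\mathcal{D}'_{\pm\lambda}$.

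The decisive step is to promote this pointwise splitting to a foliation with controlled second fundamental forms. I would derive a Codazzi-type expression for $\nabla h'$ by differentiating \eqref{ak:km:nul:c} and invoking the second Bianchi identity together with the almost Kenmotsu structure equations. For $X,Y\in\mathcal{D}'_\lambda$ and $Z\in\mathcal{D}'_{-\lambda}$ the antisymmetrised derivative governs the mixed bracket through $2\lambda\,g([X,Y],Z)=g\big((\nabla_Xh')Y-(\nabla_Yh')X,\,Z\big)$; showing the right-hand side vanishes proves that $\mathcal{D}'_\lambda$ (and symmetrically $\mathcal{D}'_{-\lambda}$) is integrable. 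Together with $g([X,Y],\xi)=0$ and $\nabla_\xi h'=0$, this also makes $[\xi]\oplus\mathcal{D}'_{\pm\lambda}$ integrable and shows the leaves of $\mathcal{D}'_{\pm\lambda}$ totally umbilic with mean-curvature vector $-(1\pm\lambda)\xi$, while $\xi$ is geodesic. In a chart adapted to the $\xi$-flow the metric therefore takes the multiply warped form $g=dt^2+e^{2(1+\lambda)t}g^{(1)}+e^{2(1-\lambda)t}g^{(2)}$, with $g^{(1)},g^{(2)}$ being $t$-independent metrics on the leaves of $\mathcal{D}'_{\lambda},\mathcal{D}'_{-\lambda}$.

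Finally, the Gauss equation for each leaf, fed with the curvature hypothesis, forces $g^{(1)}$ and $g^{(2)}$ to be flat, so each fibre is $\mathbb{R}^n$. Grouping $t$ with the first fibre produces a totally geodesic factor $dt^2+e^{2(1+\lambda)t}g^{(1)}$ of constant curvature $-(1+\lambda)^2=\kappa-2\lambda$, i.e.\ $\mathbb{H}^{n+1}(\kappa-2\lambda)$, warped against the remaining $\mathbb{R}^n$ by $f=ce^{(1-\lambda)t}$; grouping $t$ with the second fibre produces instead a factor of constant curvature $-(1-\lambda)^2=\kappa+2\lambda$, warped by $f'=c'e^{(1+\lambda)t}$. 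This gives both asserted descriptions. I expect the integrability step of the third paragraph to be the real obstacle: extracting the explicit $\nabla h'$ identity and proving that the antisymmetrised derivative vanishes on the eigendistributions is precisely where the full strength of \eqref{ak:km:nul:c} — not merely its $Y=\xi$ trace — and the Bianchi identity are needed.
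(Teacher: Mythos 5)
First, a point of reference: the paper does not prove this theorem at all --- it is quoted in the Preliminaries as a result of Dileo--Pastore (2009) --- so your attempt can only be judged against the known proof. Your first three paragraphs are essentially correct and follow that proof's route: $\nabla_X\xi=-\phi^2X+h'X$, the splitting of the Jacobi-operator equation into $(h')^2=(1+\kappa)\phi^2$ and $\nabla_\xi h'=-(2+\mu)h'$ by $\phi$-conjugation, the conclusions $\kappa<-1$, $\lambda=\sqrt{-1-\kappa}$, $\mu=-2$, and the passage to the eigendistributions of $h'$ are all sound. Moreover, the step you single out as ``the real obstacle'' is in fact immediate and needs no Bianchi identity: differentiating $\nabla\xi=\mathrm{Id}-\eta\otimes\xi+h'$ gives, for $\eta(X)=\eta(Y)=0$,
\begin{equation*}
R(X,Y)\xi=(\nabla_Xh')Y-(\nabla_Yh')X,
\end{equation*}
and the left-hand side vanishes by \eqref{ak:km:nul:c}; plugged into your bracket formula this yields integrability of $\mathcal D'_{\pm\lambda}$, total geodesy of $[\xi]\oplus\mathcal D'_{\pm\lambda}$, total umbilicity of $\mathcal D'_{\mp\lambda}$, and hence the multiply warped form $g=dt^2+e^{2(1+\lambda)t}g^{(1)}+e^{2(1-\lambda)t}g^{(2)}$.

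The genuine gap is your final paragraph. ``The Gauss equation for each leaf, fed with the curvature hypothesis'' cannot force $g^{(1)}$ and $g^{(2)}$ to be flat, because \eqref{ak:km:nul:c} carries no information about curvature components all four of whose arguments are orthogonal to $\xi$: a direct computation shows that \emph{every} metric $dt^2+e^{2(1+\lambda)t}g^{(1)}+e^{2(1-\lambda)t}g^{(2)}$, for arbitrary metrics $g^{(1)},g^{(2)}$ on the fibres, satisfies $R(X,Y)\partial_t=0$ for fibre vectors and $R(X,\partial_t)\partial_t=-(1\pm\lambda)^2X$ on the two factors, i.e.\ exactly the nullity condition with $\kappa=-1-\lambda^2$, $\mu=-2$ and the same operator $h'$. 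So by this stage the curvature hypothesis is exhausted, and your argument has meanwhile dropped the tensor $\phi$ entirely. The flatness of the fibres is where the almost Kenmotsu equation $d\Phi=2\eta\wedge\Phi$ must be used: on a leaf of $\mathcal D$ the form $\Phi$ is closed, and it vanishes on each eigendistribution (since $\phi$ interchanges $\mathcal D'_{\lambda}$ and $\mathcal D'_{-\lambda}$, which are $g$-orthogonal); hence in product coordinates $(x,y)$ adapted to the splitting $V_1\times V_2$ of a leaf one gets a local potential, $\Phi=\sum_{i,j}\frac{\partial^2F}{\partial x^i\partial y^j}\,dx^i\wedge dy^j$, while compatibility of $\phi$ with $g$ says that this mixed Hessian is at every point a linear isometry $(T_xV_1,g^{(1)})\to(T_y^*V_2,(g^{(2)})^{-1})$. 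Consequently, for each fixed $y$ the map $x\mapsto d_yF(x,y)$ is a local isometry of $(V_1,g^{(1)})$ into a fixed Euclidean $n$-space, so $g^{(1)}$ --- and symmetrically $g^{(2)}$ --- is flat. Only after this step does your identification of $dt^2+e^{2(1\pm\lambda)t}g^{(i)}$ with a space of constant curvature $-(1\pm\lambda)^2=\kappa\mp2\lambda$, and hence the asserted warped-product description, become valid.
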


 It is known that almost Kenmotsu manifold as Riemannian 
manifold is locally conformal to almost cosymplectic manifold. For this point 
of view see \cite{F1},  \cite{O3}. In \cite{PS1} authors study generalized nullity 
distribution on almost Kenmotsu manifold. 

Curvature properties of more general class of almost 
cosymplectic and almost Kenmotsu so-called $(\kappa,\mu,\nu)$-manifolds are studied in 
\cite{CarMarMol}, \cite{DO2}.

\subsection{Almost paracontact metric manifolds} 
 Almost 
paracontact metric structure on $(2n+1)$-dimensional manifold $\manifold M$, $n \geqslant 1$, 
 is a quadruple 
of tensor fields $(\phi, \xi,\eta,g)$, where $\phi$ is an  affinor, 
$\xi$  characteristic vector field
\footnote{We will use also term Reeb vector field}, 1-form $\eta$ and 
pseudo-Riemannian metric $g$ of signature $(n+1,n)$. It is assumed that 
\begin{equation}
\begin{array}{l}
\phi^2 = Id-\eta\otimes \xi,\quad \eta(\xi) =1 , \\[+4pt]
g(\phi X,\phi Y) = - g(X,Y)+\eta(X)\eta(Y).
\end{array}
\end{equation}

The immediate consequences of the definition are that tensor field 
$\phi$ - viewed as linear map on tangent space - has three 
eigenvalues $(0,-1,1)$ of multiplicities $(1,n,n)$. Eigendistributions 
$ p \mapsto \mathcal V_p^{-1} $, $p \mapsto \mathcal V_p^{+1}$ are totally isotropic,
$g(\mathcal V^-, \mathcal V^-) = 0$, $g(\mathcal V^+,\mathcal V^+)=0$.  
Tensor field $\Phi(X,Y) = g(X,\phi Y)$ is 
 fundamental 2-form,
$\eta\wedge \Phi^n \neq 0$ everywhere. Manifold $\manifold M $ equipped with fixed
almost paracontact metric structure is called almost paracontact
metric manifold.

Let $\manifold M$ be an almost paracontact metric manifold. $\mathcal M$ is
said to be: 
\begin{enumerate}
\item
 normal if 
$$
N_\phi -2d\eta\otimes \xi =0;
$$
\item
paracontact metric if 
$$ 
d\eta = \Phi;
$$
\item
almost paracosymplectic if 
$$
d\eta=0, \quad d\Phi =0;
$$
\item
 almost para-Kenmotsu
if 
$$
d\eta=0,\quad d\Phi = 2\eta\wedge \Phi.
$$
\end{enumerate}

Let define $h = \frac{1}{2}\LieDerivative \phi$.  
Applying derivative $\LieDerivative$ to both sides of identity $\phi^2 = Id -\eta\otimes\xi$,  
we obtain  
\begin{equation}
 h \phi +  \phi h  = -\frac{1}{2} (\xi \llcorner d\eta)\otimes \xi,
\end{equation}
therefore $h$ and $\phi$ anti-commute if and only if $\xi \llcorner d\eta =0$.
 For all  mentioned above classes of manifolds - normal etc., the  condition
$\xi \llcorner d\eta = 0$ is satisfied. Therefore for all above classes we have 
$$
h\phi +\phi h=0.
$$

Let denote by $\nabla$
the Levi-Civita connection of $\mathcal M$,
$R_{XY}Z=[\nabla_X,\nabla_Y]Z-\nabla_{[X,Y]}Z $, curvature operator of $\nabla$.
 Almost paracontact metric manifold is called
$(\kappa,\mu)$-space if its curvature satisfies 
\begin{align*}  
 & R_{XY}Z  =  \kappa(\eta(Y)X-\eta(X)Y) +\mu(\eta(Y)hX - \eta(X)hY),  \quad \kappa,\mu \in \mathbb R.
\end{align*}

In similar as for almost contact metric manifold there is introduced notion of $\mathcal D$-homothety of 
almost paracontact metric manifold. 

Let $\mathcal M$ be $(2n+1)$-dimensional almost paracontact metric manifold. 
A local frame of vector fields $\xi$, $E_i$, $E_{i+n}$, $i=1,\ldots n$, is called Artin frame if
\begin{align}
& \phi E_i = E_i, \quad \phi E_{i+n} = - E_{i+n}, \quad i=1,\ldots n \\
& g(\xi, E_i)=g(\xi, E_{i+n})=0, \quad g(E_i, E_{i+n}) =1,\quad i=1,\ldots n.
\end{align}
Gauge of local Artin frame is deformation of Artin into another Artin frame defined by
\begin{align}
E_i\mapsto E'_i = f_i E_i, \quad E_i\mapsto E'_{i+n}=f^{-1}_iE_{i+n}, \quad i=1,\ldots n,
\end{align}
where $f_1,\ldots f_n$ is a family of locally defined function, non-zero everywhere on domain of their definition.

\subsection{Three dimensional connected simply connected Lie groups}
Curvature of arbitrary left-invariant Riemannian metric on 3-dimensional Lie group 
was described in simple and intuitive way by John Milnor in his 
paper \cite{Milnor}. In particular for unimodular groups there is  
\begin{theorem}[J.~Milnor \cite{Milnor}]
Let $\mathcal G$ be 3-dimensional unimodular Lie group,  
equipped with left-invariant Riemannian metric.  There is orthonormal frame $(e_1,e_2, e_3)$ of 
left-invariant vector fields and constants $\lambda_1$, $\lambda_2$, 
$\lambda_3$, such that 
\begin{equation}
[e_2,e_3]=\lambda_1 e_1, \quad 
[e_3,e_1] = \lambda_2 e_2, \quad
[e_1,e_2]= \lambda_3 e_3.
\end{equation}
Signs of $\lambda_i$
\footnote{The base change $e_i \mapsto -e_i$ follows $\lambda_i \mapsto -\lambda_i$.}
    up to the order determine $\mathcal G$ uniquely 
if $\mathcal G$ is connected and simply connected. Define 
$\mu_1$, $\mu_2$, $\mu_3$
\begin{equation}
\mu_i = \frac{1}{2}(\lambda_1+\lambda_2+\lambda_3)-\lambda_i.
\end{equation}
The orthonormal base $(e_1,e_2,e_3)$ diagonalizes Ricci quadratic form, 
the principal Ricci curvatures $r_i=r(e_i)$, $i=1,2,3$, being given by 
\begin{equation}
r_1=2\mu_2\mu_3, \quad
r_2= 2\mu_1\mu_3, \quad
r_3= 2\mu_1\mu_2.
\end{equation}
Let $v\times w$ be vector product determined by
$e_1\times e_2 = e_3$, $e_2\times e_3 = e_1$, $e_3\times e_1=e_2$. 
On Lie algebra connection maps are given by 
$x \mapsto \nabla_{e_i}x= \mu_i (e_i \times x)$. 
 \end{theorem}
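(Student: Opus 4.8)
The plan is to reduce the entire statement to linear algebra on the Lie algebra $\mathfrak{g}$ of $\mathcal{G}$, exploiting the three-dimensional cross product. Fix the inner product that the left-invariant metric induces on $\mathfrak{g}$ together with an orientation, and let $v\times w$ be the associated vector product. Since the Lie bracket is an alternating bilinear map $\mathfrak{g}\times\mathfrak{g}\to\mathfrak{g}$, and since every such map on an oriented three-dimensional inner product space factors uniquely through $\times$, I can write $[v,w]=L(v\times w)$ for a unique linear endomorphism $L$ of $\mathfrak{g}$. The first key step is to prove that $\mathcal{G}$ is unimodular if and only if $L$ is self-adjoint: computing in an orthonormal basis one finds $\operatorname{tr}(\operatorname{ad}_{e_i})=L_{jk}-L_{kj}$ for the cyclic triple $\{i,j,k\}$, so the vanishing of $\operatorname{tr}(\operatorname{ad}_X)$ for all $X$ is exactly the symmetry of $L$. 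I regard this equivalence as the conceptual heart of the argument; the Jacobi identity needs no attention since $\mathcal{G}$ is a Lie group by hypothesis.

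Given unimodularity, $L$ is self-adjoint, so by the spectral theorem there is an orthonormal eigenbasis $(e_1,e_2,e_3)$ with real eigenvalues $\lambda_1,\lambda_2,\lambda_3$; after permuting or reorienting I may assume the frame is positively oriented, so that $e_1\times e_2=e_3$ cyclically. The bracket relations then fall out at once, e.g. $[e_2,e_3]=L(e_2\times e_3)=L(e_1)=\lambda_1 e_1$. Next I compute the connection from the Koszul formula: for left-invariant fields the terms carrying derivatives of the metric vanish, leaving $2g(\nabla_X Y,Z)=g([X,Y],Z)-g([Y,Z],X)+g([Z,X],Y)$. Substituting the bracket relations and reading off components yields $\nabla_{e_i}e_j=\mu_i(e_i\times e_j)$, where the bookkeeping rests on the identities $\mu_1+\mu_2+\mu_3=\tfrac12(\lambda_1+\lambda_2+\lambda_3)$ and $\lambda_i=\mu_j+\mu_k$ for $\{i,j,k\}=\{1,2,3\}$.

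With the connection in the compact form $\nabla_{e_i}=\mu_i(e_i\times\,\cdot\,)$, the curvature is a direct computation. Evaluating $R(e_i,e_j)e_j=\nabla_{e_i}\nabla_{e_j}e_j-\nabla_{e_j}\nabla_{e_i}e_j-\nabla_{[e_i,e_j]}e_j$ gives the sectional curvature of each coordinate plane, and summing the two planes through a fixed axis produces the principal Ricci curvature $r_i=2\mu_j\mu_k$. Because the off-diagonal components $g(R(e_i,e_k)e_k,e_j)$ with $i\neq j$ vanish by the same orthogonality, the frame diagonalizes the Ricci form. I do not expect difficulty here beyond keeping track of the signs coming from the cross product.

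It remains to show that the sign pattern of $(\lambda_1,\lambda_2,\lambda_3)$, up to order, determines the simply connected group. Rescaling the frame $e_i\mapsto c_ie_i$ with $c_i>0$ corresponds to choosing a different left-invariant metric and multiplies each $\lambda_i$ by a positive factor, so only the multiset of signs is an invariant of the underlying Lie algebra; normalizing the nonzero eigenvalues to $\pm1$ leaves six patterns, corresponding to $\mathfrak{su}(2)$, $\mathfrak{sl}(2,\mathbb{R})$, the Lie algebras of $\widetilde{E}(2)$ and of $E(1,1)$, the Heisenberg algebra, and the abelian algebra. These six are pairwise non-isomorphic, separated by standard invariants such as simplicity, solvability, and the signature of the Killing form, and a connected simply connected Lie group is determined up to isomorphism by its Lie algebra. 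The main obstacle in this last step is the bookkeeping of a double implication: one must verify both that the sign pattern is an isomorphism invariant and that matching patterns yield isomorphic algebras through explicit rescalings of the frame.
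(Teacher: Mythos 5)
Your proposal is correct, and it is essentially Milnor's original argument: the paper itself states this theorem as an imported result from \cite{Milnor} without giving any proof, and your reconstruction --- writing the bracket as $[v,w]=L(v\times w)$, identifying unimodularity with self-adjointness of $L$, applying the spectral theorem to get the orthonormal eigenframe, deriving $\nabla_{e_i}=\mu_i(e_i\times\,\cdot\,)$ from the Koszul formula, computing the Ricci curvatures, and classifying by sign patterns --- is precisely the route taken in that cited source. There is no gap to report.
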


\section{Three-dimensional contact metric and almost cosymplectic $(\kappa,\mu)$-manifolds}
Let $(\phi,\xi,\eta,g)$ be three-dimensional left-invariant almost paracontact metric structure 
on unimodular Lie group $\mathcal G$. We assume there is an orthonormal frame of 
left-invariant vector fields $(\xi,E_1,E_2)$, such that $\phi E_1 = E_2$ and
 commutators are given by
\begin{equation}
 [E_1,E_2]=2k \xi,\quad [E_2,\xi] = -(\lambda+c)E_1, \quad
 [\xi, E_1] = (\lambda-c)E_2,
\end{equation}
$k$, $\lambda$, $c$ are some constants.

 We define $2\times 2$-matrix $A=ad_\xi|_{\{E_1,E_2\}}$. 

\begin{proposition}
\label{kmA1}
Let matrix $A$ be non-nilpotent. The almost contact metric manifold $\mathcal M$ 
is $(k^2-\lambda^2, 2(k+c))$-space, ie.
\begin{equation}
\label{klc:curv}
R_{XY}\xi = (k^2-\lambda^2)(\eta(Y)X-\eta(X)Y) +
2(k+c)(\eta(Y)hX-\eta(X)hY).
\end{equation}
\end{proposition}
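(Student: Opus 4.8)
The plan is to compute the curvature operator $R_{XY}\xi$ directly from the structure constants using Milnor's formulas, so the key preliminary step is to translate the given commutators into Milnor's normal form. First I would record the Lie bracket relations $[E_1,E_2]=2k\xi$, $[E_2,\xi]=-(\lambda+c)E_1$, $[\xi,E_1]=(\lambda-c)E_2$ and match them against the pattern $[e_2,e_3]=\lambda_1 e_1$, etc., with the identification $(e_1,e_2,e_3)=(\xi,E_1,E_2)$ (up to sign conventions for the cross product). This reads off the three Milnor constants; e.g.\ $[\xi,E_1]=(\lambda-c)E_2$ gives one, $[E_2,\xi]=-(\lambda+c)E_1$ another, and $[E_1,E_2]=2k\xi$ the third. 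From these I would compute the auxiliary quantities $\mu_1,\mu_2,\mu_3$ via $\mu_i=\tfrac12(\lambda_1+\lambda_2+\lambda_3)-\lambda_i$, and then obtain the connection through $\nabla_{e_i}x=\mu_i(e_i\times x)$.

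Next I would assemble the covariant derivatives $\nabla_X\xi$ for $X\in\{\xi,E_1,E_2\}$ explicitly. Since $\xi=e_1$, the relevant cross products $e_1\times e_2=e_3$ and $e_1\times e_3=-e_2$ (equivalently $e_3\times e_1=e_2$) give $\nabla_{E_1}\xi$ and $\nabla_{E_2}\xi$ as multiples of $E_2$ and $E_1$ respectively, while $\nabla_\xi\xi=0$. With the connection in hand, I would compute $R_{XY}\xi=\nabla_X\nabla_Y\xi-\nabla_Y\nabla_X\xi-\nabla_{[X,Y]}\xi$ on the frame pairs; by skew-symmetry and the fact that $R_{XY}\xi$ must be tensorial it suffices to evaluate $R_{E_1 E_2}\xi$, $R_{\xi E_1}\xi$, and $R_{\xi E_2}\xi$.

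In parallel I need the operator $h=\tfrac12\mathcal L_\xi\phi$ expressed on the frame, because the target identity \eqref{klc:curv} involves $hX$. Here I would use $\mathcal L_\xi\phi=\tfrac12(\mathcal L_\xi\phi)$ computed from $(\mathcal L_\xi\phi)E_i=[\xi,\phi E_i]-\phi[\xi,E_i]$ together with $\phi E_1=E_2$, $\phi E_2=-E_1$ (so that $\phi^2=-Id+\eta\otimes\xi$ holds on $\mathcal D$). This yields $hE_1$ and $hE_2$ as explicit multiples of $E_1,E_2$, and in particular shows $h$ is diagonalized by the frame with eigenvalues determined by $c$; this is exactly where the hypothesis that $A=\mathrm{ad}_\xi|_{\{E_1,E_2\}}$ is non-nilpotent enters, since it guarantees the off-diagonal structure produces a genuine (nonzero, diagonalizable) $h$ rather than a nilpotent degeneration. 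Finally I would verify that the computed $R_{XY}\xi$ matches $(k^2-\lambda^2)(\eta(Y)X-\eta(X)Y)+2(k+c)(\eta(Y)hX-\eta(X)hY)$ by checking the coefficients of $E_1$ and $E_2$ agree on each frame pair.

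The main obstacle I anticipate is bookkeeping: reconciling the sign conventions in Milnor's cross-product normalization with the given commutators (a wrong sign in identifying $\lambda_i$ propagates into $\mu_i$ and flips terms in the curvature), and ensuring the eigenvalues of $h$ are scaled correctly so that the factor $2(k+c)$ emerges rather than $(k+c)$ or $4(k+c)$. The non-nilpotency hypothesis on $A$ must be used to justify that the frame genuinely diagonalizes $h$ and that the curvature takes the stated $(\kappa,\mu)$-form with these specific constants; I would confirm at the end that $\kappa=k^2-\lambda^2$ is consistent with the eigenstructure of $A$, whose characteristic behavior is governed by $\lambda^2-k^2$.
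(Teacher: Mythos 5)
Your computational skeleton is exactly the paper's proof: apply Milnor's theorem to the orthonormal frame $(\xi,E_1,E_2)$ to get the connection, evaluate the frame components of $R_{XY}\xi$ (reducing everything to $R_{E_1E_2}\xi=0$ and the Jacobi operator $J_\xi X=R_{X\xi}\xi$), compute $h$ on the frame, and match coefficients. With the identification $\lambda_1=2k$, $\lambda_2=-(\lambda+c)$, $\lambda_3=\lambda-c$, hence $\mu_1=-(k+c)$, $\mu_2=k+\lambda$, $\mu_3=k-\lambda$, this yields $R_{E_1\xi}\xi=(k^2-\lambda^2+2\lambda(k+c))E_1$, $R_{E_2\xi}\xi=(k^2-\lambda^2-2\lambda(k+c))E_2$ and $R_{E_1E_2}\xi=0$, which is precisely what the paper computes.

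However, two of your anticipated details are wrong and must be corrected before the matching step can produce the stated constants. First, $h$ does not have eigenvalues ``determined by $c$'': from $(\mathcal L_\xi\phi)E_i=[\xi,\phi E_i]-\phi[\xi,E_i]$ one gets $hE_1=\lambda E_1$, $hE_2=-\lambda E_2$, so $c$ enters only through the connection, never through $h$. The matching then reads $\kappa+\mu\lambda=k^2-\lambda^2+2\lambda(k+c)$ and $\kappa-\mu\lambda=k^2-\lambda^2-2\lambda(k+c)$, giving $\kappa=k^2-\lambda^2$ and $\mu=2(k+c)$ when $\lambda\neq 0$; with your ``$c$-eigenvalues'' the factor $2(k+c)$ would not emerge. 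Second, you misattribute the role of the hypothesis: $A=\left(\begin{smallmatrix} 0 & \lambda+c \\ \lambda-c & 0\end{smallmatrix}\right)$ is nilpotent exactly when $\lambda^2=c^2$, but $h$ is diagonal in this frame in \emph{all} cases, it can vanish even when $A$ is non-nilpotent (take $\lambda=0$, $c\neq 0$), and identity (\ref{klc:curv}) holds verbatim in the nilpotent cases as well --- Proposition \ref{kmA2} is the same formula with $c=\pm\lambda$ substituted. Non-nilpotency here only serves to separate this case from the degenerate ones treated later; the one genuine caveat, which the paper records in the preliminaries, is that if $\lambda=0$ then $h=0$ and $\mu=2(k+c)$ is only formally valid, not determined. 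Your closing remark that the characteristic behavior of $A$ is governed by $\lambda^2-k^2$ is likewise off: $A^2=(\lambda^2-c^2)\,I$, and $k$ does not occur in $A$ at all.
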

\begin{proof}
We employ Milnor's method \footnote{Mentioned earlier local isometry with Lie group justifies use of Milnor's theorem.} 
to find Levi-Cvita connection coefficients and 
then directly compute:
\begin{eqnarray}
 & & \label{rexx} R_{E_1\xi}\xi = (k^2-\lambda^2+2\lambda(k+c))E_1, \quad
R_{E_2\xi}\xi = (k^2-\lambda^2-2\lambda(k+c))E_2,  \\ [+4pt] 
& & \label{reex} R_{E_1 E_2}\xi = 0,   
\end{eqnarray}
from (\ref{reex})  it follows that 
\begin{equation}
R_{XY}\xi = \eta(Y)R_{X\xi}\xi-\eta(X)R_{Y\xi}\xi,
\end{equation}
Jacobi operator $J_\xi X= R_{X\xi}\xi$ by the first set of identities has 
decomposition
\begin{equation}
J_\xi X = (k^2-\lambda^2)(Id-\eta(X)\xi) + 2(k+c)h,
\end{equation}
therefore
\begin{eqnarray*}
R_{XY}\xi & = &  \eta(Y)R_{X\xi}\xi-\eta(X)R_{Y\xi}\xi  
                 =  (k^2-\lambda^2)(\eta(Y)X-\eta(X)Y) + \\
             & &    2(k+c)(\eta(Y)hX-\eta(X)hY).
\end{eqnarray*}
\end{proof}

Following Milnor's paper  we obtain 
\begin{proposition}
Pincipal Ricci curvatures of $\mathcal M$  are given by
\begin{equation}
\label{ricci}
\begin{array}{l}
r_1=Ric(\xi,\xi) = 2(k^2-\lambda^2), \quad 
r_2= Ric(E_1,E_1) = -2(k+c)(k-\lambda),\\ [+4pt]
r_3 = Ric(E_2,E_2) = -2(k+c)(k+\lambda),
\end{array}
\end{equation}
scalar curvature
\begin{equation}
s = \sum r_i =-2(k^2+\lambda^2)-4kc. 
\end{equation}
\end{proposition}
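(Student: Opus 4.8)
The plan is to read the stated structure directly against Milnor's theorem, whose hypotheses are exactly met here: the frame $(\xi,E_1,E_2)$ is orthonormal and the bracket relations are already presented in the cyclic normal form the theorem requires. First I would set $(e_1,e_2,e_3)=(\xi,E_1,E_2)$ and extract the structure constants by matching each commutator against that normal form: comparing $[E_1,E_2]=2k\xi$ with $[e_2,e_3]=\lambda_1 e_1$ gives $\lambda_1=2k$; comparing $[E_2,\xi]=-(\lambda+c)E_1$ with $[e_3,e_1]=\lambda_2 e_2$ gives $\lambda_2=-(\lambda+c)$; and comparing $[\xi,E_1]=(\lambda-c)E_2$ with $[e_1,e_2]=\lambda_3 e_3$ gives $\lambda_3=\lambda-c$. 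The only delicate point at this stage is respecting the cyclic order so that no spurious sign creeps in.

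Next I would form the auxiliary quantities $\mu_i=\tfrac12(\lambda_1+\lambda_2+\lambda_3)-\lambda_i$. Since $\lambda_1+\lambda_2+\lambda_3=2k-(\lambda+c)+(\lambda-c)=2(k-c)$, the half-sum equals $k-c$, whence $\mu_1=(k-c)-2k=-(k+c)$, $\mu_2=(k-c)+(\lambda+c)=k+\lambda$, and $\mu_3=(k-c)-(\lambda-c)=k-\lambda$. Substituting into Milnor's Ricci expressions $r_1=2\mu_2\mu_3$, $r_2=2\mu_1\mu_3$, $r_3=2\mu_1\mu_2$ then routinely yields $r_1=2(k+\lambda)(k-\lambda)=2(k^2-\lambda^2)$, $r_2=-2(k+c)(k-\lambda)$, and $r_3=-2(k+c)(k+\lambda)$; adding these, and using $r_2+r_3=-2(k+c)\bigl((k-\lambda)+(k+\lambda)\bigr)=-4k(k+c)$, one obtains $s=2(k^2-\lambda^2)-4k(k+c)=-2(k^2+\lambda^2)-4kc$.

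I expect no genuine analytic obstacle, since every step is a direct substitution into Milnor's closed-form expressions; the single thing worth watching is the sign bookkeeping in identifying $(\lambda_1,\lambda_2,\lambda_3)$ with the cyclic brackets. As an independent check I would confirm that $r_1=\mathrm{Ric}(\xi,\xi)$ coincides with the trace of the Jacobi operator $J_\xi$, whose two eigenvalues $k^2-\lambda^2\pm2\lambda(k+c)$ on the $\{E_1,E_2\}$-plane are recorded in (\ref{rexx}); their sum is $2(k^2-\lambda^2)$, which reproduces $r_1$ independently of Milnor's formulas and so validates the sign choices made above.
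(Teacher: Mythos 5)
Your proof is correct and is essentially the paper's own argument: the paper states this proposition with the single phrase ``Following Milnor's paper we obtain,'' i.e.\ precisely the substitution of the structure constants $\lambda_1=2k$, $\lambda_2=-(\lambda+c)$, $\lambda_3=\lambda-c$ into Milnor's formulas $r_i=2\mu_j\mu_k$ that you carry out. Your extra consistency check of $r_1$ against the trace of the Jacobi operator from (\ref{rexx}) is a sensible addition but not a departure from the paper's route.
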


\begin{remark}[Classification - contact metric case, $|I_{\mathcal M}| \neq 1$ ] 
Let $k=1$. Then $\mathcal M$ is contact metric $(\kappa,\mu)$-manifold 
with $\kappa = 1-\lambda^2$, $\mu = 2(1+c)$. Boeckx invariant $|I_{\mathcal M}| \neq 1$.  
\footnote{ $|I_{\mathcal M}| =1$  implies $\lambda^2-c^2 = 0$ which contradicts assumptions of the Proposition \ref{mil}.}
Resolving these equations
$\lambda =\sqrt{1-\kappa}$, $\frac{\mu}{2}=c +1$, we obtain explicit form
\begin{equation}
\begin{array}{l}
[E_1,E_2]=2\xi,
\quad
[E_2,\xi]=-(\sqrt{1-\kappa}+\mu/2 -1)E_1, \\ [+4pt]
[\xi,E_1]=(\sqrt{1-\kappa}-\mu/2 +1)E_2, ,
 \end{array}
 \end{equation}
 In terms of Boeckx invariant 
for non-Sasakian manifolds  resp. coefficients can be expressed as 
\begin{equation}
(I_{\mathcal M}-1)\sqrt{1-\kappa}, \quad
(I_{\mathcal M}+1)\sqrt{1-\kappa}.
\end{equation}
  By \ref{ricci}, for non-Sasakian manifold we have 
 \begin{equation}
\label{rcm}
\begin{array}{l}
r_1=Ric(\xi,\xi) = 2\kappa, \quad 
r_2= Ric(E_1,E_1) = -\mu(1-\sqrt{1-\kappa}),\\ [+4pt]
r_3 = Ric(E_2,E_2) =- \mu(1+\sqrt{1-\kappa}),
\end{array}
\end{equation}
scalar curvature
\begin{equation}
s = r_1+r_2+r_3 = 2(\kappa-\mu).
\end{equation}
For example $Ric > 0$ if and only if $0 <\kappa < 1$, $\mu < 0 $. In this case manifold 
is compact with compact covering. By list below universal cover is 
$\mathbb S^3$. 
\end{remark}

\begin{remark}[Classification - almost cosymplectic case, $|C_{\mathcal M}| \neq 1$] 
Let $k=0$. The manifold $\mathcal M$ is almost cosymplectic with $|C_{\mathcal M}| \neq 1$. 
We find
$\lambda =\sqrt{-\kappa}$, $\mu = 2c$, explicit form
\begin{equation}
[E_1, E_2] =0,\quad [E_2,\xi]= -(\sqrt{-\kappa}+\mu/2)E_1, \quad
[\xi, E_1]=(\sqrt{-\kappa}-\mu/2)E_2, \quad
\end{equation}
 In terms of Dacko-Olszak invariant $C_{\mathcal M}$  
 corresponding coefficients  are given by
 \begin{equation}
(C_{\mathcal M} -1)\sqrt{-\kappa}, \quad 
(C_{\mathcal M}+1)\sqrt{-\kappa},
\end{equation}
 By (\ref{ricci}) principal curvatures 
 of Ricci tensor  being 
 given by
 \begin{equation}
 \begin{array}{l}
r_1=Ric(\xi,\xi)= 2\kappa, \quad 
r_2=Ric(E_1,E_1)= \mu\sqrt{-\kappa}, \\[+4pt]
r_3=Ric(E_2,E_2) = -\mu\sqrt{-\kappa},
\end{array}
\end{equation}
scalar curvature
\begin{equation}
s = r_1+r_2+r_3 = 2\kappa.
\end{equation}
For example we easily see that there are only two possibilities for signature 
of Ricci tensor in case manifold is non-cosymplectic: $(-1,0,0)$ or 
$(-1,-1,+1)$. 
\end{remark}

Case $A$ nilpotent is more complex, as there is no symmetry between conditions $\lambda-c=0$ and 
$\lambda +c = 0$. 
\begin{proposition} Let $A$ be nilpotent. If $\lambda -c =0$ the commutators of $\xi$, $E_1$, $E_2$ are given 
by
\begin{equation}
\label{nilp1}
[E_1,E_2] = 2k\xi, \quad [E_2,\xi] = - 2\lambda E_1, \quad [\xi, E_1] = 0.
\end{equation}
If $\lambda +c = 0$ they are given by
\begin{equation}
\label{nilp2}
[E_1, E_2] = 2k\xi, \quad [E_2, \xi] = 0, \quad [\xi, E_1] = 2\lambda E_2.
\end{equation}
\end{proposition}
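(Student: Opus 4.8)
The plan is to read the matrix $A$ directly off the structure equations and then let nilpotency constrain $\lambda$ and $c$. From $[\xi, E_1] = (\lambda - c)E_2$ and $[\xi, E_2] = -[E_2, \xi] = (\lambda + c)E_1$, the operator $ad_\xi$ preserves $\operatorname{span}(E_1, E_2)$ and, in the frame $(E_1, E_2)$, is represented by
\begin{equation*}
A = \begin{pmatrix} 0 & \lambda + c \\ \lambda - c & 0 \end{pmatrix}.
\end{equation*}
Its trace vanishes identically, so the characteristic polynomial reduces to $t^2 + \det A = t^2 - (\lambda^2 - c^2)$. Hence $A$ is nilpotent exactly when $\det A = 0$, that is, when $\lambda^2 - c^2 = 0$.

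Next I would factor this as $(\lambda - c)(\lambda + c) = 0$, which produces precisely the two alternatives of the statement, and dispatch each by substitution. If $\lambda - c = 0$ then $[\xi, E_1] = (\lambda - c)E_2 = 0$, while $c = \lambda$ turns $[E_2, \xi] = -(\lambda + c)E_1$ into $-2\lambda E_1$, yielding (\ref{nilp1}). If instead $\lambda + c = 0$ then $[E_2, \xi] = -(\lambda + c)E_1 = 0$, while $c = -\lambda$ turns $[\xi, E_1] = (\lambda - c)E_2$ into $2\lambda E_2$, yielding (\ref{nilp2}). The bracket $[E_1, E_2] = 2k\xi$ is untouched in both branches.

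There is no analytic obstacle here—the argument is a one-line spectral observation followed by substitution—so the only point deserving care is the asymmetry the text emphasises. Since $\phi E_1 = E_2$ forces $\phi E_2 = E_1$ (because $\phi^2 = Id$ on $\mathcal D$), the eigenvectors $E_1 + E_2$ and $E_1 - E_2$ of $\phi$ for the eigenvalues $+1$ and $-1$ are genuinely inequivalent, so the two linear factors of $\det A$ cannot be swapped by any structure-preserving change of frame. This is why (\ref{nilp1}) and (\ref{nilp2}) must be recorded as separate normal forms rather than identified; I would also note the consistency check that on the overlap $\lambda = c = 0$, where $A$ vanishes outright, both sets of brackets collapse to the same relations.
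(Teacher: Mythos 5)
Your main argument is correct and is precisely the proof the paper leaves implicit (the proposition is stated there without proof): one reads off $A=(\begin{smallmatrix}0 & \lambda+c\\ \lambda-c & 0\end{smallmatrix})$ from the brackets, observes that a trace-free $2\times 2$ matrix is nilpotent exactly when its determinant $-(\lambda^2-c^2)$ vanishes, and substitutes $c=\lambda$, resp.\ $c=-\lambda$, into the structure equations.

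However, your closing remark contains a genuine error, even though the proposition itself does not depend on it. Section 3 deals with \emph{almost contact metric} structures: the word ``paracontact'' in its opening sentence is a slip, as Proposition~\ref{kmA1} (``almost contact metric manifold''), the orthonormal frame, the Boeckx invariant and Table~\ref{tab:cm} all show. Consequently $\phi^2=-Id+\eta\otimes\xi$, so $\phi E_1=E_2$ forces $\phi E_2=-E_1$; on $\mathcal D$ the operator $\phi$ is a complex structure and has no real eigenvectors, so $E_1\pm E_2$ are \emph{not} eigenvectors of $\phi$. The asymmetry the paper emphasises is carried instead by $h=\frac{1}{2}\mathcal L_\xi\phi$, which in this frame satisfies $hE_1=\lambda E_1$, $hE_2=-\lambda E_2$: the condition $\lambda-c=0$ makes $ad_\xi$ vanish on the $(+\lambda)$-eigenspace of $h$, while $\lambda+c=0$ makes it vanish on the $(-\lambda)$-eigenspace, and for the same data $(k,\lambda)$ the two branches yield different invariants, $\mu=2(k+\lambda)$ versus $\mu=2(k-\lambda)$ in Proposition~\ref{kmA2} (for $k=1$, $\lambda>0$ this is $I_{\mathcal M}=-1$ versus $I_{\mathcal M}=+1$). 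Note also that your claim that no structure-preserving change of frame can exchange the branches requires fixing the sign of $\lambda$: the rotation $E_1\mapsto E_2$, $E_2\mapsto -E_1$ \emph{does} preserve the almost contact metric structure and exchanges the two branches at the cost of replacing $\lambda$ by $-\lambda$. Your eigenvector argument would be valid verbatim in Section 4 of the paper, where the structures really are paracontact and $\phi^2=Id$ holds on $\mathcal D$.
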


\begin{proposition}
\label{kmA2}
Let $A$ be nilpotent. In case $\lambda - c =0$, the  manifold $\mathcal M$ is 
almost contact metric $(k^2-\lambda^2, 2(k+\lambda))$-space. In case $\lambda+c=0$, 
$\mathcal M$ is $(k^2-\lambda^2,2(k-\lambda))$-space.
\end{proposition}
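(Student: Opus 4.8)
The plan is to run, for each of the two nilpotent subcases, the same Milnor-type computation already used for Proposition~\ref{kmA1}; the only change is that one of the three brackets now vanishes, so the arithmetic is in fact lighter. First I would take the explicit commutators from the preceding proposition---relations \eqref{nilp1} for $\lambda-c=0$ and \eqref{nilp2} for $\lambda+c=0$---and substitute them into the Koszul formula for a left-invariant orthonormal frame,
\[
2\,g(\nabla_XY,Z)=g([X,Y],Z)-g([Y,Z],X)+g([Z,X],Y),
\]
to obtain every connection coefficient $\nabla_{E_i}\xi$, $\nabla_\xi E_i$, $\nabla_{E_i}E_j$. One records in passing that $\nabla_\xi\xi=0$ in both subcases.

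With $\nabla$ in hand I would compute the three operators $R_{E_1\xi}\xi$, $R_{E_2\xi}\xi$, $R_{E_1E_2}\xi$ exactly as in \eqref{rexx}--\eqref{reex}. Two facts must emerge: first, $R_{E_1E_2}\xi=0$, which forces the reduction $R_{XY}\xi=\eta(Y)R_{X\xi}\xi-\eta(X)R_{Y\xi}\xi$; second, the Jacobi operator is diagonalised by the frame. A direct evaluation of $h=\tfrac12\mathcal L_\xi\phi$ from the same brackets gives, in both subcases, $hE_1=\lambda E_1$ and $hE_2=-\lambda E_2$, whence
\[
J_\xi X=R_{X\xi}\xi=(k^2-\lambda^2)\bigl(X-\eta(X)\xi\bigr)+2(k+c)\,hX .
\]
Specialising $c=\lambda$ in the first subcase and $c=-\lambda$ in the second turns $\mu=2(k+c)$ into $2(k+\lambda)$ and $2(k-\lambda)$ respectively, with $\kappa=k^2-\lambda^2$ throughout, which is the assertion.

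A cleaner route, which I would prefer to record, is polynomial continuation. The Koszul coefficients are linear, and the curvature therefore quadratic, in the triple $(k,\lambda,c)$; the tensor $h$ is likewise linear. Consequently both sides of \eqref{klc:curv} have frame components that are polynomials in $(k,\lambda,c)$. Proposition~\ref{kmA1} guarantees their difference vanishes on the open dense set $\{\lambda^2\neq c^2\}$ on which $A$ is non-nilpotent, and a polynomial vanishing on an open set vanishes identically; hence \eqref{klc:curv} persists on the nilpotent locus $\lambda=\pm c$, and the stated $\kappa$, $\mu$ follow by specialisation.

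The single delicate point---and where a hasty argument could slip---is that nilpotency of $A=ad_\xi|_{\{E_1,E_2\}}$ does not transfer to $h$. On the nilpotent locus $A$ becomes a single nonzero Jordan block with $A^2=0$, yet $h$ stays diagonalisable with eigenvalues $\pm\lambda$, and it is $h$, not $A$, that drives the $(\kappa,\mu)$-splitting of $J_\xi$. Thus the diagonal form of the Jacobi operator survives into the boundary case, and the asymmetry between \eqref{nilp1} and \eqref{nilp2} that the text flags influences only intermediate coefficients, not the final symmetric answer.
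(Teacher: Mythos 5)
Your proposal is correct. Its first half---rerunning the Koszul/Milnor computation of Proposition~\ref{kmA1} with the specialized brackets \eqref{nilp1} and \eqref{nilp2}---is precisely the paper's own proof, which consists of the single remark that the argument goes on in the same way as for Proposition~\ref{kmA1}; your verification that $hE_1=\lambda E_1$, $hE_2=-\lambda E_2$ and $R_{E_1E_2}\xi=0$ persist in both subcases fills in exactly what that remark leaves implicit. Your second route, the polynomial continuation, is a genuine alternative that the paper does not take: since the left-invariant frame components of the connection, of $R_{XY}\xi$, and of $h$ are universal polynomials in the structure constants $(k,\lambda,c)$, and Proposition~\ref{kmA1} establishes \eqref{klc:curv} on the open dense parameter set $\{\lambda^2\neq c^2\}$, the identity extends to the nilpotent locus, and the stated pairs $(k^2-\lambda^2,\,2(k+\lambda))$ and $(k^2-\lambda^2,\,2(k-\lambda))$ are just the specializations $c=\lambda$ and $c=-\lambda$ of $(k^2-\lambda^2,\,2(k+c))$. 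What each buys: the continuation argument is computation-free and explains structurally why the asymmetry between \eqref{nilp1} and \eqref{nilp2} surfaces only in the value of $\mu$, whereas the paper's direct route produces the explicit connection and curvature coefficients that the subsequent Remarks, Corollaries and Ricci formulas reuse. Two small caveats, neither a gap: your claim that on the nilpotent locus $A$ is a single nonzero Jordan block requires $\lambda\neq 0$ (if $\lambda=c=0$ then $A=0$ and $h=0$, in which case $\mu$ is formally arbitrary---the ambiguity the paper records after \eqref{km:nul:c}); and the continuation argument tacitly uses that the Lie algebra stays unimodular for all $(k,\lambda,c)$, which holds here because the brackets are already in Milnor's cyclic diagonal form.
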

\begin{proof}
Proof goes on in the same way as the proof of {\bf Proposition \ref{kmA1}}.
\end{proof}

\begin{corollary}[Contact metric structure, $I_{\mathcal M} = -1$]
In the Proposition above, for $k=1$, $\lambda-c=0$, $\mathcal M$ is contact metric manifold with 
Boeckx invariant $I_{\mathcal M}=-1$. Lie algebra of vector fields $(\xi, E_1, E_2)$ is isomorphic to the 
Lie algebra of left-invariant vector fields on the Lie groups of rigid motions of hyperbolic plane
\footnote{Plane with indefinite flat pseudo-metric} $ E(1,1)$.    
\end{corollary}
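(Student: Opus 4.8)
The plan is to specialize \textbf{Proposition~\ref{kmA2}} to $k=1$, $c=\lambda$, read off the contact metric data, evaluate the Boeckx invariant, and finally identify the underlying three-dimensional Lie algebra. Setting $k=1$ in \eqref{nilp1} gives
\begin{equation*}
[E_1,E_2]=2\xi,\qquad [E_2,\xi]=-2\lambda E_1,\qquad [\xi,E_1]=0 .
\end{equation*}
Because $k=1$ is exactly the value at which $d\eta=\Phi$ (the contact metric remark above), $\mathcal M$ is contact metric, and by \textbf{Proposition~\ref{kmA2}} it is a $(\kappa,\mu)$-space with $\kappa=1-\lambda^2$, $\mu=2(1+\lambda)$.

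Before invoking the Boeckx invariant I would check non-Sasakianity. Computing $h=\tfrac12\LieDerivative\phi$ on the frame from the brackets (with $\phi E_1=E_2$, $\phi E_2=-E_1$) yields $hE_1=\lambda E_1$ and $hE_2=-\lambda E_2$, so $h\neq 0$ precisely when $\lambda\neq 0$; for $\lambda>0$ the space is therefore non-Sasakian and $I_{\mathcal M}$ is well defined. Substituting $(\kappa,\mu)$ into its definition gives
\begin{equation*}
I_{\mathcal M}=\frac{1-\mu/2}{\sqrt{1-\kappa}}=\frac{-\lambda}{\sqrt{\lambda^2}}=-1 ,
\end{equation*}
which is the first assertion.

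For the Lie algebra the key observation is that $[\mathfrak g,\mathfrak g]=\operatorname{span}\{\xi,E_1\}$ is a two-dimensional \emph{abelian} ideal, since $[\xi,E_1]=0$, and that $\operatorname{ad}_{E_2}$ restricted to it is represented in the basis $(\xi,E_1)$ by a matrix with characteristic polynomial $t^2-4\lambda$. For $\lambda>0$ the eigenvalues $\pm 2\sqrt{\lambda}$ are real and of opposite sign, so passing to the eigenvectors $U=\xi-\sqrt{\lambda}\,E_1$, $V=\xi+\sqrt{\lambda}\,E_1$ and rescaling $W=E_2/(2\sqrt{\lambda})$ produces
\begin{equation*}
[W,U]=U,\qquad [W,V]=-V,\qquad [U,V]=0 ,
\end{equation*}
the standard presentation of $\mathfrak e(1,1)$, the Lie algebra of rigid motions of the pseudo-Euclidean plane. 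Equivalently, reading $(\xi,E_1,E_2)$ as a Milnor orthonormal frame gives structure constants $(\lambda_1,\lambda_2,\lambda_3)=(2,-2\lambda,0)$ with sign pattern $(+,-,0)$, the entry for $E(1,1)$ in Milnor's classification.

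The only delicate point is the sign of $\lambda$, which controls simultaneously the sign of $I_{\mathcal M}$ and whether $\operatorname{ad}_{E_2}$ has real or imaginary eigenvalues. For $\lambda<0$ the same computation would return $I_{\mathcal M}=+1$ and sign pattern $(+,+,0)$, hence $\widetilde{E(2)}$ in place of $E(1,1)$; the hypothesis $I_{\mathcal M}=-1$ (equivalently $\lambda>0$, consistent with the convention $\lambda=\sqrt{1-\kappa}\ge 0$ above) is precisely what selects the hyperbolic rather than the elliptic rigid-motion group. Keeping these sign conventions aligned is the main obstacle; the remaining steps are direct substitutions.
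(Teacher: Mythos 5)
Your proposal is correct and follows exactly the route the paper leaves implicit: specialize Proposition~\ref{kmA2} (with the brackets \eqref{nilp1}) to $k=1$, $c=\lambda$, read off $\kappa=1-\lambda^2$, $\mu=2(1+\lambda)$, get $I_{\mathcal M}=-\lambda/|\lambda|=-1$, and identify the Lie algebra as $\mathfrak{e}(1,1)$ via the Milnor sign pattern $(+,-,0)$. The paper states the corollary without proof, and your filled-in details --- the non-Sasakianity check $hE_1=\lambda E_1$, $hE_2=-\lambda E_2$, the eigenvalue analysis of $\operatorname{ad}_{E_2}$ on the abelian ideal, and the observation that the sign of $\lambda$ (fixed by the convention $\lambda=\sqrt{1-\kappa}>0$) is what distinguishes $E(1,1)$ from $\widetilde{E(2)}$ --- are all accurate.
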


\begin{corollary}[Contact metric structure, $I_{\mathcal M} = 1$] In the case $k=1$, $\lambda+c=0$,
$\mathcal M$ is contact metric manifold with Boeckx invariant $I_{\mathcal M}=1$. Lie algebra of 
vector fields $(\xi, E_1, E_2)$ is isomorphic to the 
Lie algebra of left-invariant vector fields on the Lie group of rigid motions of Euclidean plane $E(2)$.  
\end{corollary}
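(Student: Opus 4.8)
The plan is to specialise Proposition~\ref{kmA2} to the stated values and then read off the Lie algebra directly from Milnor's classification. First I would set $k=1$ and impose $\lambda+c=0$, so that the commutators are those of~\eqref{nilp2},
\[
[E_1,E_2]=2\xi,\qquad [E_2,\xi]=0,\qquad [\xi,E_1]=2\lambda E_2,
\]
and Proposition~\ref{kmA2} gives that $\mathcal M$ is a $(\kappa,\mu)$-space with $\kappa=1-\lambda^2$ and $\mu=2(1-\lambda)$; the choice $k=1$ is what forces $d\eta=\Phi$, so $\mathcal M$ is contact metric. Since $\kappa=1$ exactly when $\lambda=0$, the space is non-Sasakian (the regime in which the Boeckx invariant is defined) precisely for $\lambda\neq0$, and using the base change $E_i\mapsto-E_i$ recorded in Milnor's theorem I may assume $\lambda>0$.

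Next I would compute the Boeckx invariant from its definition $I_{\mathcal M}=(1-\mu/2)/\sqrt{1-\kappa}$. With $\mu/2=1-\lambda$ one has $1-\mu/2=\lambda$, while $\sqrt{1-\kappa}=\sqrt{\lambda^2}=\lambda$ for $\lambda>0$; hence $I_{\mathcal M}=1$. The sign of $\lambda$ is exactly what separates this corollary from the previous one, since $\lambda<0$ would give $I_{\mathcal M}=-1$.

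For the identification of the Lie algebra I would invoke Milnor's theorem directly, as $(\xi,E_1,E_2)$ is an orthonormal frame for the Riemannian contact metric structure. Setting $e_1=\xi$, $e_2=E_1$, $e_3=E_2$, the brackets above are already in Milnor's cyclic canonical form, with structure constants
\[
\lambda_1=2,\qquad \lambda_2=0,\qquad \lambda_3=2\lambda.
\]
For $\lambda>0$ the sign pattern is $(+,0,+)$, i.e.\ $(+,+,0)$ up to reordering, which by Milnor's classification of unimodular three-dimensional Lie groups is the signature of (the universal cover of) the group $E(2)$ of rigid motions of the Euclidean plane. As only the Lie algebra is asserted, this identifies the span of $(\xi,E_1,E_2)$ with $\mathfrak{e}(2)$.

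The step I expect to require the most care is the last one, because the hypothesis that $A=\mathrm{ad}_\xi$ is nilpotent is misleading: read naively it might suggest a nilpotent (Heisenberg) algebra. The correct invariant is instead the adjoint action of the transverse generator $E_1$ on the abelian ideal $\mathrm{span}\{E_2,\xi\}$; in the basis $(E_2,\xi)$ this action is $\left(\begin{smallmatrix}0&-2\lambda\\2&0\end{smallmatrix}\right)$, whose eigenvalues $\pm2i\sqrt{\lambda}$ are purely imaginary for $\lambda>0$. This elliptic behaviour forces the rotation type $\mathfrak{e}(2)$ rather than the hyperbolic type $\mathfrak{e}(1,1)$ (which occurs for $\lambda<0$, where the eigenvalues become real and $I_{\mathcal M}=-1$), and it is precisely this imaginary-versus-real dichotomy that underlies the split between the present corollary and its companion.
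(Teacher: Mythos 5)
Your route is the one the paper itself intends (it states the corollary without separate proof): specialize Proposition~\ref{kmA2} and the brackets~(\ref{nilp2}) to $k=1$, $\lambda+c=0$, compute the Boeckx invariant, and identify the Lie algebra from Milnor's sign classification. Your computations of $(\kappa,\mu)=(1-\lambda^2,\,2(1-\lambda))$, of $I_{\mathcal M}=\lambda/\sqrt{\lambda^2}$, and of the Milnor constants $(\lambda_1,\lambda_2,\lambda_3)=(2,0,2\lambda)$ are all correct, and your closing observation that $\mathfrak{e}(2)$ versus $\mathfrak{e}(1,1)$ is detected by the elliptic versus hyperbolic action of $\mathrm{ad}_{E_1}$ on $\mathrm{span}\{E_2,\xi\}$ is a correct and genuinely useful supplement.

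The gap is the normalization step ``using the base change $E_i\mapsto -E_i$ I may assume $\lambda>0$.'' No base change can accomplish this. Concretely: to preserve the adapted-frame conditions $\phi E_1=E_2$ and $[E_1,E_2]=2\xi$ you must flip $E_1$ and $E_2$ simultaneously, and that leaves $k$, $\lambda$, $c$ unchanged; flipping only one of them destroys $\phi E_1=E_2$ and sends $k=1$ to $k=-1$; and in Milnor's lemma a sign flip of a single basis vector changes the sign of \emph{all three} constants $\lambda_i$, never of $\lambda_3$ alone. More fundamentally, a change of basis never alters the isomorphism type: for $\lambda<0$ the algebra with constants $(2,0,2\lambda)$ has sign pattern $(+,0,-)$ and genuinely \emph{is} $\mathfrak{e}(1,1)$, with $I_{\mathcal M}=-1$, so the statement is simply false in that subcase and cannot be ``normalized'' into truth. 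The correct resolution, which your last paragraph comes close to but does not state, is that the subcase $\lambda+c=0$, $\lambda<0$ is not an exception to be removed but is literally the previous corollary: the adapted-frame rotation $E_1\mapsto E_2$, $E_2\mapsto -E_1$ preserves $g$, $\phi$ and $k$, and sends $(\lambda,c)\mapsto(-\lambda,c)$, hence carries the data $\lambda+c=0$, $\lambda<0$ into the data $\lambda-c=0$ with positive parameter, i.e.\ the $E(1,1)$, $I_{\mathcal M}=-1$ case. Thus the two corollaries are separated by the sign convention $\lambda>0$, which must be imposed as a hypothesis (it is implicit in the paper, where $\lambda=\sqrt{1-\kappa}$ is taken positive) rather than derived; once it is imposed, the rest of your argument is a complete proof.
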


Almost cosymplectic case is less troublesome, setting $k=0$ in (\ref{nilp1}) or in (\ref{nilp2}) we obtain Lie algebra 
of 3-dimensional  Heisenberg Lie group. The respective almost cosymplectic manifolds are $(\kappa,\mu)$-manifolds 
with Dacko-Olszak invariant $C_{\mathcal M} = -1$ in the case (\ref{nilp2}) or $C_{\mathcal M}=1$ for (\ref{nilp2}).

\section{Three-dimensional almost paracontact metric $(\kappa,\mu)$-manifolds}
\subsection{Family of examples of almost paracontact metric $(\kappa,\mu)$-spaces}
We study particular class of left-invariant almost paracontact metric structures on 
Lie groups. Let $\mathcal G$ be connected 3-dimensional Lie group and $(\phi, \xi, \eta, g)$ 
left-invariant structure on $\mathcal G$. Let $(\xi, E_1, E_2)$ be an Artin frame of left-invariant 
vector fields on $\mathcal G$. Lie algebra of $\mathcal G$ is given by 
\begin{align}
\label{lie:al:def}
& [E_1,E_2] = p_1E_1 +p_2 E_2+2u \xi, \\
& [\xi, E_1] = a E_1+ b E_2, \quad [\xi, E_2] = c E_1 + d E_2.
\end{align} 
Set $A = (\begin{smallmatrix} a & c \\ b & d \end{smallmatrix})$, 
$p =(\begin{smallmatrix} p_1 \\ p_2 \end{smallmatrix})$, $I$ - identity $2\times 2$-matrix. 
Jacobi identity requires 
\begin{equation}
u(a+d) = 0, \quad (A - (a+d)I)p = 0
\end{equation}

In what will follow if not otherwise explicitly stated we assume $p_1=p_2=0$, $a+d=0$. 
\begin{proposition}
\label{con:coef}
 Levi-Civita connection coefficients are given by 
\begin{align}
& \nabla_\xi\xi = 0, \quad \nabla_\xi E_1= l_3 E_1, \quad \nabla_\xi E_2 = -l_3 E_2, \\
& \nabla_{E_1}\xi = -k_1E_1-k_2E_2, \quad \nabla_{E_1}E_1 = k_2 \xi ,\quad \nabla_{E_1}E_2= k_1\xi,  \\
&  \nabla_{E_2}\xi = -m_1 E_1-m_2 E_2, \quad \nabla_{E_2}E_1 = m_2 \xi,\quad \nabla_{E_2}E_2= m_1\xi.  
\end{align}
\end{proposition}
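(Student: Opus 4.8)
The plan is to produce every $\nabla_XY$ with $X,Y\in\{\xi,E_1,E_2\}$ directly from Koszul's formula. Since the frame is left-invariant and all inner products of frame fields are constant, the derivative terms drop and Koszul collapses to
$$
2g(\nabla_XY,Z)=g([X,Y],Z)-g([Y,Z],X)+g([Z,X],Y),
$$
which involves only structure constants. First I would fix the metric in the Artin frame. Because $\phi E_1=E_1$ and $\phi E_2=-E_2$, the fields $E_1,E_2$ lie in the totally isotropic eigendistributions $\mathcal V^{+1},\mathcal V^{-1}$, so $g(E_1,E_1)=g(E_2,E_2)=0$; combined with the Artin conditions $g(\xi,E_i)=0$, $g(E_1,E_2)=1$ and the compatibility relation at $(\xi,\xi)$, which gives $g(\xi,\xi)=\eta(\xi)^2=1$, the Gram matrix of $(\xi,E_1,E_2)$ is the hyperbolic one. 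The single feature demanding care is that on the null pair a vector $V=v^0\xi+v^1E_1+v^2E_2$ is recovered by $v^0=g(V,\xi)$, $v^1=g(V,E_2)$, $v^2=g(V,E_1)$: the indices swap.

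Next I would insert the standing hypotheses $p_1=p_2=0$ and $d=-a$ (imposed so that $u(a+d)=0$ holds in the Jacobi identity while leaving $u$ free), reducing the nonzero brackets to $[E_1,E_2]=2u\xi$, $[\xi,E_1]=aE_1+bE_2$, $[\xi,E_2]=cE_1-aE_2$. Running the Koszul identity on each ordered pair and pairing against $Z=\xi,E_1,E_2$ is then a bounded bookkeeping task. It produces $\nabla_\xi\xi=0$; the diagonal pair $\nabla_\xi E_1=(a-u)E_1$ and $\nabla_\xi E_2=-(a-u)E_2$, so that $l_3=a-u$; the derivatives of $\xi$, namely $\nabla_{E_1}\xi=-uE_1-bE_2$ and $\nabla_{E_2}\xi=-cE_1+uE_2$, which carry no $\xi$-component and yield $k_1=u$, $k_2=b$, $m_1=c$, $m_2=-u$; and finally $\nabla_{E_1}E_1=b\xi$, $\nabla_{E_1}E_2=u\xi$, $\nabla_{E_2}E_1=-u\xi$, $\nabla_{E_2}E_2=c\xi$, each proportional to $\xi$.

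Rather than trust the arithmetic blindly, I would record why the form is forced. Differentiating $g(E_j,\xi)=0$ along $E_i$ and using $\nabla g=0$ gives $g(\nabla_{E_i}E_j,\xi)=-g(E_j,\nabla_{E_i}\xi)$, which turns each $\xi$-coefficient of $\nabla_{E_i}E_j$ into the matching off-diagonal entry of $\nabla_{E_i}\xi$; this is exactly why the same constants $k_1,k_2,m_1,m_2$ reappear in $\nabla_{E_1}E_1=k_2\xi$, $\nabla_{E_1}E_2=k_1\xi$, $\nabla_{E_2}E_1=m_2\xi$, $\nabla_{E_2}E_2=m_1\xi$. Likewise $\tfrac12 E_i\,g(E_i,E_i)=0$ kills the self-components, and the torsion-free identity supplies the consistency check $\nabla_{E_1}E_2-\nabla_{E_2}E_1=2u\xi=[E_1,E_2]$. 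There is no conceptual obstacle here: the whole argument is a finite Koszul computation, and the only genuine hazard is a sign slip induced by the isotropic Gram matrix and the index swap on the null pair, which is precisely why I would cross-check the output against metric compatibility and torsion-freeness as above.
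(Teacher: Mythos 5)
Your proposal is correct, and the explicit values it produces check out: running the collapsed Koszul formula against the hyperbolic Gram matrix gives $\nabla_\xi E_1=(a-u)E_1$, $\nabla_\xi E_2=-(a-u)E_2$, $\nabla_{E_1}\xi=-uE_1-bE_2$, $\nabla_{E_2}\xi=-cE_1+uE_2$, and the four derivatives $\nabla_{E_i}E_j$ proportional to $\xi$, which realizes the proposition's pattern with $l_3=a-u$, $k_1=u$, $k_2=b$, $m_1=c$, $m_2=-u$. Your organization differs from the paper's, though. The paper argues in Milnor's style: left-invariance makes each bilinear form $L(X,Y)=g(\nabla_{e}X,Y)$ skew-symmetric, so its matrix is a generic skew-symmetric $3\times 3$ matrix; the endomorphism matrix of $\nabla_{e}$ is then read off by letting the isotropic metric swap the last two rows (the same index swap you flag as the main hazard), and the Lie-algebra structure (no $\xi$-component in $[\xi,E_i]$, $[E_1,E_2]$ proportional to $\xi$) forces $l_1=l_2=0$ and the analogous vanishings for $\nabla_{E_1}$, $\nabla_{E_2}$. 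Consequently the paper's proposition asserts only the vanishing pattern, with $l_3,k_1,k_2,m_1,m_2$ left free; their identification with the structure constants via $a=l_3+k_1=l_3-m_2$, $b=k_2$, $c=m_1$, $u=k_1=-m_2$ is stated separately afterwards as a consequence of torsion-freeness. Your Koszul computation fuses the two steps --- Koszul being exactly metric compatibility plus torsion-freeness --- so you get the pattern and the explicit coefficient values simultaneously, and the paper's post-proposition relations come out for free (indeed $l_3+k_1=(a-u)+u=a$ and $k_1=-m_2=u$ agree with them). What the paper's route buys is brevity and a template that is applied verbatim to each of $\nabla_\xi,\nabla_{E_1},\nabla_{E_2}$; what yours buys is explicitness and the built-in cross-checks (metric compatibility, $\nabla_{E_1}E_2-\nabla_{E_2}E_1=2u\xi$) that guard against exactly the sign slips the isotropic pairing invites.
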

\begin{proof}
For example $L(X,Y)=g(\nabla_\xi X,Y)$ is skew-symetric tensor, therefore matrix of coefficients 
is just common skew-symmetric $3\times 3$-matrix 
\begin{equation}
[L] =\begin{pmatrix}
0 & -l_1 & -l_2 \\
l_1 & 0 & -l_3 \\
l_2 & l_3 & 0
\end{pmatrix}.
\end{equation}
Notice that matrix of coefficients of $\nabla_\xi$, is obtained by switching two last rows in $[L]$. 
Due to constraints coming from Lie algebra structure there must be $l_1=l_2=0$. And we apply 
the same procedure to $\nabla_{E_i}$, $i=1,2$.
\end{proof}
As connection is torsion-less we have following relations between connection coefficients and 
Lie algebra structure constants
\begin{equation}
a=l_3+k_1= l_3-m_2, \quad b=k_2, \quad c=m_1, \quad k_1 = -m_2 \quad u=k_1=-m_2.
\end{equation}

Jacobi operator $J_\xi$ associated to the vector field $\xi$ is defined by 
$X \mapsto J_\xi X = R_{X\xi}\xi$, $R$ stands for curvature operator 
$R_{XY}Z = \nabla_X\nabla_YZ-\nabla_Y\nabla_X Z -\nabla_{[X,Y]}Z$.

\begin{proposition}
The curvature $R_{XY}\xi$, is determined 
by Jacobi operator $J_\xi$ 
\begin{equation}
R_{XY}\xi = \eta(Y)J_\xi X - \eta(X) J_\xi Y.
\end{equation}
\end{proposition}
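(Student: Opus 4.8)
The plan is to exploit that both sides of the asserted identity are tensorial and antisymmetric in $X$ and $Y$, so it suffices to verify the equality on the pairs drawn from the Artin frame $(\xi, E_1, E_2)$. Recalling that $\eta = g(\xi,\cdot)$ gives $\eta(\xi)=1$ and $\eta(E_1)=\eta(E_2)=0$, the right-hand side collapses considerably on each such pair, and the whole statement reduces to three cases.

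First I would dispose of the pairs involving $\xi$. For $X=E_i$, $Y=\xi$ the right-hand side equals $\eta(\xi)J_\xi E_i - \eta(E_i)J_\xi \xi = J_\xi E_i = R_{E_i\xi}\xi$, which is exactly the left-hand side by the definition of the Jacobi operator; the reversed pair follows from the antisymmetry $R_{XY}=-R_{YX}$. Thus these cases are immediate and carry no computational content.

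The entire content therefore reduces to the pair $(E_1,E_2)$, where the right-hand side vanishes identically because $\eta(E_1)=\eta(E_2)=0$. The remaining task is to show $R_{E_1E_2}\xi = 0$, and here I would compute directly from the connection coefficients of Proposition \ref{con:coef}. Since $[E_1,E_2]=2u\xi$ and $\nabla_\xi\xi=0$, the term $\nabla_{[E_1,E_2]}\xi$ drops out, leaving $\nabla_{E_1}\nabla_{E_2}\xi - \nabla_{E_2}\nabla_{E_1}\xi$. Expanding $\nabla_{E_1}\xi = -k_1E_1-k_2E_2$ and $\nabla_{E_2}\xi = -m_1E_1-m_2E_2$, and then using $\nabla_{E_1}E_1=k_2\xi$, $\nabla_{E_1}E_2=k_1\xi$, $\nabla_{E_2}E_1=m_2\xi$, $\nabla_{E_2}E_2=m_1\xi$, each double covariant derivative produces a multiple of $\xi$, namely $-(m_1k_2+m_2k_1)\xi$ and $-(k_1m_2+k_2m_1)\xi$ respectively; these two multiples coincide, so their difference is zero.

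I do not expect a genuine obstacle here: the only subtlety is bookkeeping, namely confirming that the cross-terms $m_1k_2$ and $k_2m_1$ (and likewise $m_2k_1$ and $k_1m_2$) cancel, which they do because the $\xi$-components of $\nabla_{E_i}E_j$ enter symmetrically in the two orders of differentiation. The decisive structural fact is the vanishing of $\nabla_{[E_1,E_2]}\xi$, which relies precisely on $\nabla_\xi\xi=0$ as recorded in Proposition \ref{con:coef}; once that term is gone, the remaining second-order terms annihilate each other and the proposition follows.
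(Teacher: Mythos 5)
Your proposal is correct and follows essentially the same route as the paper: the paper likewise reduces the claim to showing $R_{E_1E_2}\xi=0$ and verifies this by the identical computation with the connection coefficients of Proposition~\ref{con:coef}, using $\nabla_{[E_1,E_2]}\xi = 2u\nabla_\xi\xi = 0$ and the cancellation $-(m_1k_2+m_2k_1)\xi + (k_1m_2+k_2m_1)\xi = 0$. Your explicit check of the frame pairs involving $\xi$ merely spells out what the paper leaves implicit in the phrase ``it is enough to show that $R_{E_1E_2}\xi=0$.''
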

\begin{proof}
It is enough to show that $R_{E_1E_2}\xi = 0$. So
\begin{equation}
\label{r:e1e2:xi}
\begin{array}{rcl}
R_{E_1E_2}\xi & = & \nabla_{E_1}\nabla_{E_2}\xi - \nabla_{E_2}\nabla_{E_1}\xi - \nabla_{[E_1,E_2]}\xi = \\
               & &   -m_1\nabla_{E_1}E_1-m_2 \nabla_{E_1}E_2 + k_1 \nabla_{E_2}E_1+k_2\nabla_{E_2}E_2 = \\
               & &        -(m_1k_2+m_2k_1)\xi +(m_1k_2+m_2k_1)\xi =0, 
\end{array}
\end{equation}
we have used $\nabla_{[E_1,E_2]}\xi = u\nabla_\xi\xi=0$.
\end{proof}

If Jacobi operator can be decomposed into $J_\xi = \kappa Id +\mu h$, then 
$\mathcal G$ equipped with such structure 
became almost  paracontact metric $(\kappa,\mu)$-space. 
Jacobi operator is symmetric in the sense that $g(J_\xi X,Y)=g(X, J_\xi Y)$. Possible 
non-zero coefficients are $a_{ij}=g(J_\xi E_i, E_j)$, $i,j=1,2$,  
$a_{ij}=a_{ji}$. Therefore
\begin{equation}
(g(J_\xi E_i,E_j)) = 
\begin{pmatrix} 
a_{11} & a_{21} \\
a_{12} & a_{22}
\end{pmatrix},
\end{equation}
$a_{12}=a_{21}= \kappa $ and
\begin{equation}
J_\xi E_1 =  \kappa E_1 +a_{11}E_2, \quad J_\xi E_2= a_{22}E_1+\kappa E_2,
\end{equation} 
 
For local components of $h=\dfrac{1}{2}\mathcal L_\xi \phi$, we have 
\begin{equation}
h\xi = 0, \quad h E_1 = k_2 E_2, \quad h E_2 = -m_1 E_1.
\end{equation}

Manifold is $(\kappa,\mu)$-manifold, iff
 $a_{11} = \mu k_2$ and $a_{22}= -\mu m_1$, for some constant $\mu$. We verify
\begin{eqnarray}
& a_{11} = g(J_\xi E_1, E_1) =R(E_1,\xi,\xi,E_1) = -2l_3 k_2, & \\
& a_{22} = g(J_\xi E_2, E_2) = R(E_2,\xi, \xi, E_2) = 2l_3 m_1. &
\end{eqnarray} 

So at this point we see that every manifold so far being considered is $(\kappa,\mu)$-manifold 
with 
\begin{align}
\label{apc:kappa}
& \kappa = a_{12} = R(E_1,\xi,\xi, E_2)= k_1m_2 - k_2m_1 = - ( u^2+bc ), \\
\label{apc:mu}
& \mu = -2l_3 = 2(u-a),
\end{align}

All considered examples  satisfy 
\begin{equation}
\label{deta:dphi}
d\eta = u \Phi,\quad d\Phi = 0.
\end{equation} 
For particular values $u=1$, $u=0$ we obtain paracontact metric $(\kappa,\mu)$-spaces, resp. 
 almost paracosymplectic $(\kappa,\mu)$-spaces.

Before proceeding further at first let focus on trying to find out $\mathcal D$-homothety invariants
similar to Boeckx or Dacko-Olszak invariants.

\begin{proposition}
Quantity $(u-\mu/2)^2/(u^2+\kappa)$, is $\mathcal D$-homothetically 
invariant 
\begin{equation}
\dfrac{(u-\mu/2)^2}{u^2+\kappa} = \dfrac{(u-\mu'/2)^2}{u^2+\kappa'}.
\end{equation}
\end{proposition}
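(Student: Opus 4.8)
The plan is to exploit the fact that every structure under consideration is left-invariant and that all the quantities entering the statement are encoded in the Lie-algebra structure constants $a,b,c,u$. First I rewrite the quantity in these terms. By \eqref{apc:kappa} we have $u^2+\kappa = u^2-(u^2+bc) = -bc$, and by \eqref{apc:mu} we have $u-\mu/2 = u-(u-a) = a$. Hence
\[
\frac{(u-\mu/2)^2}{u^2+\kappa} = \frac{a^2}{-bc},
\]
and it suffices to show that $a^2/(bc)$ is unchanged by a $\mathcal D$-homothety (which in particular presupposes $bc\neq 0$, i.e. $u^2+\kappa\neq 0$).

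Next I determine how the Artin frame transforms. Under the $\mathcal D$-homothety $\phi'=\phi$, $\xi'=\alpha^{-1}\xi$, $\eta'=\alpha\eta$, $g'=\alpha g+\alpha(\alpha-1)\eta\otimes\eta$, the affinor is unchanged, so $E_1,E_2$ remain eigenvectors of $\phi'$ for the eigenvalues $+1,-1$. However, since $\eta$ vanishes on $E_1,E_2$, the new metric gives $g'(E_1,E_2)=\alpha$, while $g'(\xi',\xi')=1$, so $(\xi',E_1,E_2)$ is no longer an Artin frame. To restore one I set $\tilde E_1=\beta E_1$, $\tilde E_2=\gamma E_2$, which still satisfy $\phi'\tilde E_1=\tilde E_1$, $\phi'\tilde E_2=-\tilde E_2$, and I impose $g'(\tilde E_1,\tilde E_2)=\beta\gamma\alpha=1$, that is $\beta\gamma=\alpha^{-1}$. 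Thus $(\xi',\tilde E_1,\tilde E_2)$ is an Artin frame for the deformed structure, the individual choice of $\beta,\gamma$ being precisely the residual gauge freedom.

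Then I read off the new structure constants. Because the fields are left-invariant and $\alpha,\beta,\gamma$ are constants, brackets scale multiplicatively; expanding $[\xi',\tilde E_1]$, $[\xi',\tilde E_2]$, $[\tilde E_1,\tilde E_2]$ and re-expressing them in the new frame gives
\[
a'=\alpha^{-1}a,\qquad d'=\alpha^{-1}d,\qquad u'=u,
\]
together with $b'=\alpha^{-1}(\beta/\gamma)b$ and $c'=\alpha^{-1}(\gamma/\beta)c$; in particular $b'c'=\alpha^{-2}bc$, so although $b'$ and $c'$ separately depend on the gauge, their product does not. One also checks $a'+d'=\alpha^{-1}(a+d)=0$ and $p_1'=p_2'=0$, so the deformed frame again lies in the class for which \eqref{apc:kappa} and \eqref{apc:mu} were derived. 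Applying these formulas to the primed constants yields $u'^2+\kappa'=-b'c'=-\alpha^{-2}bc$ and $u'-\mu'/2=a'=\alpha^{-1}a$, whence
\[
\frac{(u'-\mu'/2)^2}{u'^2+\kappa'}=\frac{\alpha^{-2}a^2}{-\alpha^{-2}bc}=\frac{a^2}{-bc}=\frac{(u-\mu/2)^2}{u^2+\kappa}.
\]

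The only genuinely delicate point is the bookkeeping of the frame change: one must notice that the $\mathcal D$-homothety forces $g'(E_1,E_2)=\alpha$ and therefore a compensating rescaling with $\beta\gamma=\alpha^{-1}$, and that it is the product $bc$ — not $b$ and $c$ individually — that is invariant, the ambiguity in $b,c$ being absorbed by the gauge. Everything else is a direct substitution into \eqref{apc:kappa} and \eqref{apc:mu}.
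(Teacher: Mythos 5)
Your proof is correct and follows essentially the same route as the paper: pass to the deformed Artin frame, read off how the structure constants $a,b,c,u$ scale under the $\mathcal D$-homothety, and substitute into \eqref{apc:kappa} and \eqref{apc:mu}. The only differences are cosmetic --- the paper fixes the symmetric gauge $E_i'=E_i/\sqrt{\alpha}$ where you keep the general rescaling $\beta\gamma=\alpha^{-1}$ (correctly noting that only the product $bc$ is gauge-invariant), and your explicit check that $a'+d'=0$, $p'=0$ persists is a point the paper leaves implicit.
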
  

\begin{proof}
For Artin frame $(\xi, E_1, E_2)$ the frame $(\xi', E_1', E_2')$, 
$\xi' = \xi/\alpha$, $E_i' = E_i/\sqrt{\alpha}$, $i=1,2$ is Artin for structure deformed 
by $\mathcal D$-homothety. By (\ref{lie:al:def}), there is
\begin{equation}
[E_1',E_2']= 2u\xi',\quad [\xi', E_1']= a'E_1'+b'E_2', \quad [\xi',E_2']=c'E_1'-a'E_2'.
\end{equation}
Corresponding Lia algebras constants are related by
\begin{equation}
a' = \alpha a, \quad b' = \alpha b, \quad c'=\alpha c.
\end{equation}
 Moreover $\kappa$, $\kappa'$,  
$\mu$, $\mu'$ are determined in terms of these constants 
\begin{equation}
\begin{array}{lcllcl}
 \kappa & = & -u^2-bc, \quad & \kappa' & = & -u^2-b'c',  \\
 \mu & = & 2(u - a),  \quad & \mu' & = & 2(u -a'). 
\end{array} 
\end{equation}
Therefore by above relations $\mathcal D$-homothety coefficient 
\begin{equation}
\alpha^2 = \dfrac{u^2+\kappa}{u^2+\kappa'} = \dfrac{(2u-\mu)^2}{(2u-\mu')^2}.
\end{equation}
\end{proof}

For paracontact metric, $u=1$, $(\kappa,\mu)$-space, $\kappa\neq -1$ we set 
\begin{align}
\label{pBoeckx}
& E_{\mathcal M}= (1-\mu/2)^2/ (1+\kappa).
\end{align}
For almost  paracosymplectic, $u=0$, $(\kappa,\mu)$-space, $\kappa \neq 0$ we set 
\begin{align}
& F_{\mathcal M} = (-\mu/2)^2/\kappa.
\end{align}
By above remarks both $E_{\mathcal M}$, and $F_{\mathcal M}$ are $\mathcal D$-homothety 
invariant.
                    
Set $N^{(1)} = [\phi,\phi]-2d\eta\otimes \xi$, $N^{(2)}(X,Y)=\mathcal L_{\phi X}\eta(Y)-\mathcal L_{\phi Y}\eta(X)$,
\begin{proposition} For every almost paracontact metric manifold there is
\begin{align}
\label{cov:div:pfi}
& 2g((\nabla_X\phi)Y,Z)= -3d\Phi(X,\phi Y,\phi Z)-3d\Phi(X,Y,Z)-g(N^{(1)}(Y,Z),\phi X) + \\
 & \quad   N^{(2)}(Y,Z)\eta(X) +2d\eta(\phi Y,X)\eta(Z)-2d\eta(\phi Z,X)\eta(Y),  \nonumber
\end{align}
\end{proposition}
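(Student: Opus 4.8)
The plan is to convert the entire statement into an identity among covariant derivatives and then feed it through the standard torsion-free identities for $d\Phi$, $d\eta$ and the Nijenhuis tensors. First I would pass from the affinor $\phi$ to the fundamental form $\Phi$: since $\nabla g=0$, lowering an index gives $g((\nabla_X\phi)Y,Z)=(\nabla_X\Phi)(Y,Z)$, so the left-hand side is exactly $2(\nabla_X\Phi)(Y,Z)$ and the whole claim becomes an algebraic relation among $\nabla\Phi$, $d\Phi$, $d\eta$, $N^{(1)}$ and $N^{(2)}$. Because $\nabla$ is torsion-free, in the normalization used here $3\,d\Phi(X,Y,Z)$ equals the cyclic sum $\mathfrak S_{X,Y,Z}(\nabla_X\Phi)(Y,Z)$, while $2\,d\eta(X,Y)=(\nabla_X\eta)(Y)-(\nabla_Y\eta)(X)$ with $(\nabla_X\eta)(Y)=g(\nabla_X\xi,Y)$. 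These reduce both transvected exterior derivatives on the right to explicit combinations of $\nabla\Phi$ and $\nabla\xi$.

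The next step is to put the two curvature-free tensors into the same language. Using torsion-freeness, the Nijenhuis torsion $N_\phi=[\phi,\phi]$ expands into a combination of $(\nabla_{\phi Y}\phi)Z$, $(\nabla_{\phi Z}\phi)Y$, $\phi(\nabla_Y\phi)Z$ and $\phi(\nabla_Z\phi)Y$ together with the $\phi^2$-correction, so $N^{(1)}=N_\phi-2\,d\eta\otimes\xi$ and its contraction $g(N^{(1)}(Y,Z),\phi X)$ become first covariant derivatives paired against $\phi X$; similarly $N^{(2)}(Y,Z)=(\mathcal L_{\phi Y}\eta)(Z)-(\mathcal L_{\phi Z}\eta)(Y)$ reduces to derivatives of $\eta$ on $\phi Y,\phi Z$. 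The essential algebraic input is the differentiated structure equation, obtained by applying $\nabla_X$ to $\phi^2=\mathrm{Id}-\eta\otimes\xi$,
$$\phi(\nabla_X\phi)Y+(\nabla_X\phi)\phi Y=-g(\nabla_X\xi,Y)\,\xi-\eta(Y)\,\nabla_X\xi,$$
supplemented by the differentiated compatibility coming from $g(\phi\cdot,\phi\cdot)=-g+\eta\otimes\eta$. Together these let me move a $\phi$ from inside $\nabla\phi$ to the outside at the cost of controlled $\xi$- and $\eta$-remainders, and it is precisely the $\eta(Y)\nabla_X\xi$ remainder that later supplies the boundary $d\eta$-terms.

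With these tools I would compute $3\,d\Phi(X,\phi Y,\phi Z)$ and $3\,d\Phi(X,Y,Z)$ separately. Expanding the first as a cyclic sum and repeatedly invoking the differentiated structure equation to collect all occurrences of $\phi$ onto one slot, the six resulting terms collapse: three reproduce $\pm(\nabla_X\Phi)(Y,Z)$ and the remainder assembles into $-g(N^{(1)}(Y,Z),\phi X)$ together with the $N^{(2)}(Y,Z)\eta(X)$ piece. Subtracting $3\,d\Phi(X,Y,Z)$ then kills the spurious symmetric contributions and isolates $2(\nabla_X\Phi)(Y,Z)$ plus the two terms $2\,d\eta(\phi Y,X)\eta(Z)-2\,d\eta(\phi Z,X)\eta(Y)$. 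Re-raising the index with $g$ yields the asserted formula.

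The main obstacle I anticipate is bookkeeping of signs rather than any conceptual difficulty. In the paracontact setting each use of $\phi^2=\mathrm{Id}-\eta\otimes\xi$ and of $g(\phi\cdot,\phi\cdot)=-g+\eta\otimes\eta$ flips a sign relative to the familiar almost contact metric computation, so the delicate point is to confirm that the coefficients $-3$ on both $d\Phi$ terms, the sign of $g(N^{(1)}(Y,Z),\phi X)$, and the factor $2$ on the $d\eta$ terms all emerge consistently. To guard against a miscount I would verify the identity on the adapted splitting $T\mathcal M=\mathbb R\xi\oplus\mathcal V^{+1}\oplus\mathcal V^{-1}$, checking separately the cases where the arguments are proportional to $\xi$ or lie in the eigendistributions, since there the $\phi$-eigenvalues are explicit and every sign flip is transparent.
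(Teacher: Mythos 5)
Your plan is viable and, once the computation is carried through, it yields the proposition; but it is organized along a genuinely different route than the paper's. The paper handles this lemma in one line, declaring the proof ``similar'' to the almost contact metric case and citing \cite{Blair}: there the identity is obtained by feeding $2g(\nabla_X(\phi Y),Z)$ and $2g(\nabla_X Y,\phi Z)$ into the Koszul formula for the Levi-Civita connection and then recognizing the resulting directional derivatives and Lie brackets as the blocks $3d\Phi$, $g(N^{(1)}(Y,Z),\phi X)$, $N^{(2)}(Y,Z)\eta(X)$ and the boundary $d\eta$-terms. You instead reduce the right-hand side to first covariant derivatives of the structure tensors: $3d\Phi$ as the cyclic sum of $\nabla\Phi$, $2d\eta$ as the alternation of $\nabla\eta$, and, via torsion-freeness, $N_\phi(Y,Z)=(\nabla_{\phi Y}\phi)Z-(\nabla_{\phi Z}\phi)Y+\phi(\nabla_Z\phi)Y-\phi(\nabla_Y\phi)Z$, closing the algebra with the differentiated structure equations. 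Both arguments rest on exactly the same inputs (vanishing torsion and $\nabla g=0$), so neither is more general; Blair's is self-contained at the level of brackets, whereas yours keeps every intermediate object tensorial, makes the paracontact sign flips coming from $g(\phi\cdot,\phi\cdot)=-g+\eta\otimes\eta$ traceable term by term, and admits your final pointwise check on $\mathbb R\xi\oplus\mathcal V^{+1}\oplus\mathcal V^{-1}$, which is legitimate because every term in the identity is tensorial in $X$, $Y$, $Z$.

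One slip to repair before you start: with the paper's convention $\Phi(Y,Z)=g(Y,\phi Z)$, differentiation gives $(\nabla_X\Phi)(Y,Z)=g(Y,(\nabla_X\phi)Z)$, and since $\nabla_X\phi$ is skew-adjoint (differentiate $g(\phi Y,Z)+g(Y,\phi Z)=0$) this equals $-g((\nabla_X\phi)Y,Z)$. Your opening identification $g((\nabla_X\phi)Y,Z)=(\nabla_X\Phi)(Y,Z)$ therefore holds only for the opposite convention $\Phi(Y,Z)=g(\phi Y,Z)$, and as written it propagates an overall sign through the whole computation. This is precisely the bookkeeping hazard you flagged; fix the convention at the outset and confirm that it reproduces the stated signs $-3d\Phi(X,\phi Y,\phi Z)-3d\Phi(X,Y,Z)-g(N^{(1)}(Y,Z),\phi X)$, which differ from the almost contact metric case exactly because of the para-compatibility of $g$.
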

\begin{proof}
Proof is similar to the proof of analogous formula in case of almost contact metric manifolds. The latter can be found 
in \cite{Blair}
\end{proof}

\begin{corollary}
\label{cov:div:3}
 Three-dimensional almost paracontact metric manifold satisifies
\begin{align}
& g((\nabla_X\phi)Y,Z) = (f\Phi(Y,X) +d\eta(\phi Y,X)+g(hY,X))\eta(Z)- \\ 
& \qquad (f\Phi(Z,X)+d\eta(\phi Z,X)+g(hZ,X))\eta(Y), \nonumber \\
& d\Phi = 2f\eta\wedge\Phi 
\end{align}
\end{corollary}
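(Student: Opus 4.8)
The plan is to obtain both displayed identities by specializing the general formula \eqref{cov:div:pfi} to $\dim\mathcal M=3$, where the constraints on top-degree forms and on the three eigenvalues of $\phi$ collapse almost all of the right-hand side. I would begin with the second identity $d\Phi=2f\eta\wedge\Phi$, which is purely a dimension count: since $\dim\mathcal M=3$, the bundle $\Lambda^3T^*\mathcal M$ is a line bundle, and the standing hypothesis $\eta\wedge\Phi\neq0$ everywhere says $\eta\wedge\Phi$ trivializes it. Hence the $3$-form $d\Phi$ is a smooth-function multiple of $\eta\wedge\Phi$; writing that function as $2f$ both defines $f$ and proves the claim. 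This is the only place the dimension enters through a top-form argument.

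Next I would substitute $d\Phi=2f\eta\wedge\Phi$ into \eqref{cov:div:pfi} and evaluate the two $d\Phi$-terms. Using $\eta\circ\phi=0$, $\phi\xi=0$, the skew-adjointness of $\phi$, and the identity $\Phi(\phi Y,\phi Z)=-\Phi(Y,Z)$ (which follows from $\phi^2=Id-\eta\otimes\xi$ together with $g(\phi\cdot,\phi\cdot)=-g+\eta\otimes\eta$ and $g(\phi Y,\xi)=0$), the term $d\Phi(X,\phi Y,\phi Z)$ reduces to a single $\eta(X)$-multiple which cancels the corresponding piece of $d\Phi(X,Y,Z)$. What survives, after the overall division by $2$, is exactly the combination $f\Phi(Y,X)\eta(Z)-f\Phi(Z,X)\eta(Y)$, matching the $f$-terms of the statement once the normalization constants in the wedge and exterior-derivative conventions are chosen so that the prefactors $-3$ and $2f$ cohere.

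The heart of the argument is the Nijenhuis term $-g(N^{(1)}(Y,Z),\phi X)$. Here I would prove and use two structural facts. First, for $Y,Z\in\mathcal D$ one has $N^{(1)}(Y,Z)=0$: on an Artin (eigen)frame $E_1\in\mathcal V^{+1}$, $E_2\in\mathcal V^{-1}$ one computes $N_\phi(E_1,E_2)=(\phi^2-Id)[E_1,E_2]=-\eta([E_1,E_2])\xi=2d\eta(E_1,E_2)\xi$, which the correction $-2d\eta\otimes\xi$ cancels exactly. Second, the key identity $N_\phi(\xi,X)=-2\phi hX$, obtained by rewriting $N_\phi(\xi,\cdot)$ through $\mathcal L_\xi\phi=2h$. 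Decomposing $Y,Z$ into $\xi$- and $\mathcal D$-components, the first fact annihilates the $\mathcal D\times\mathcal D$ contribution, while the second together with $g(\phi A,\phi X)=-g(A,X)$ for $A\in\mathcal D$ turns the surviving cross terms into $2\bigl(g(hY,X)\eta(Z)-g(hZ,X)\eta(Y)\bigr)$; dividing by $2$ yields the $h$-terms of the statement. The remaining pieces pass through cleanly: the $d\eta$-terms $2d\eta(\phi Y,X)\eta(Z)-2d\eta(\phi Z,X)\eta(Y)$ become the stated $d\eta$-contributions, and the tensor $N^{(2)}(Y,Z)\eta(X)$ vanishes, since $N^{(2)}(E_1,E_2)=2d\eta(\phi E_1,E_2)-2d\eta(\phi E_2,E_1)=0$ is automatic from $\phi E_1=E_1$, $\phi E_2=-E_2$, and its remaining components reduce to $\xi\llcorner d\eta$, which is zero for the classes under consideration.

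I expect the main obstacle to be this Nijenhuis step: establishing $N_\phi(\xi,X)=-2\phi hX$ and organizing the $\xi$/$\mathcal D$ decomposition so that the numerical factors of $2$ survive the global division, because this is precisely where the definition $h=\tfrac12\mathcal L_\xi\phi$ must be reconciled with the torsion appearing in \eqref{cov:div:pfi}. Once these structural identities are in place, the rest is routine bookkeeping of convention-dependent constants.
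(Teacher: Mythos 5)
Your overall strategy---specializing (\ref{cov:div:pfi}) via three-dimensionality, proving that $N^{(1)}$ and $N^{(2)}$ vanish on $\mathcal D\times\mathcal D$ by an Artin-frame computation, and handling the mixed terms through the identity $N_\phi(\xi,X)=-2\phi hX$---is essentially the paper's argument, only organized differently: the paper first deduces $g((\nabla_X\phi)\bar Y,\bar Z)=0$ for the horizontal parts $\bar Y=Y-\eta(Y)\xi$, $\bar Z=Z-\eta(Z)\xi$, and then reduces the whole identity to a single formula for $g(\nabla_X\xi,\phi Z)$, whereas you expand the master formula directly; your explicit frame computations actually supply details the paper merely asserts. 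However, there is one genuine gap. Twice you invoke ``$\xi\llcorner d\eta=0$, which is zero for the classes under consideration'': once to kill the components $N^{(2)}(\xi,\cdot)$, and once to apply $g(\phi A,\phi X)=-g(A,X)$ with $A=hY$, $A=hZ$. The corollary carries no such hypothesis---it is asserted for \emph{every} three-dimensional almost paracontact metric manifold, where $\xi\llcorner d\eta$ need not vanish---and the second use is not even independent of the first, since $\eta(hY)=-d\eta(\xi,\phi Y)$, so $hY\in\mathcal D$ for all $Y$ is \emph{equivalent} to $\xi\llcorner d\eta=0$. As written, your argument proves the statement only for the subclass satisfying $\xi\llcorner d\eta=0$ (which covers the paper's later applications, but not the corollary as stated).

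The gap is fixable, and the fix is a cancellation your bookkeeping misses: the two quantities you discard are nonzero in general but are negatives of each other. Indeed, in the conventions you are already using,
\begin{equation*}
g(\phi hZ,\phi X)=-g(hZ,X)+\eta(hZ)\eta(X),\qquad \eta(hZ)=-d\eta(\xi,\phi Z),\qquad N^{(2)}(\xi,Z)=2\,d\eta(\xi,\phi Z).
\end{equation*}
Hence, after decomposing $Y=\bar Y+\eta(Y)\xi$, $Z=\bar Z+\eta(Z)\xi$, the Nijenhuis cross terms contribute not only $2\bigl(g(hY,X)\eta(Z)-g(hZ,X)\eta(Y)\bigr)$ but also the extra piece $2\bigl(\eta(Y)\eta(hZ)-\eta(Z)\eta(hY)\bigr)\eta(X)$, while the term you set to zero equals $N^{(2)}(Y,Z)\eta(X)=2\bigl(\eta(Y)\,d\eta(\xi,\phi Z)-\eta(Z)\,d\eta(\xi,\phi Y)\bigr)\eta(X)$; by the middle identity above these two contributions cancel identically. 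Keeping both and letting them cancel (together with $h\xi=0$, so $h\bar Y=hY$, $\phi\bar Y=\phi Y$), your computation delivers the stated formula with no hypothesis on $\xi\llcorner d\eta$, in full agreement with the paper's proof, where the same cancellation is absorbed into the asserted formula for $g(\nabla_X\xi,\phi Z)$.
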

\begin{proof}
On three-dimensional manifold 
\begin{equation*}
-3d\Phi(X,\phi Y,\phi Z)-3d\Phi(X,Y,Z) = 2f\eta(Y)\Phi(X,Z)-2f\eta(Z)\Phi(X,Y).
\end{equation*}
Moreover tensor fields $N^{(1)}$ and $N^{(2)}$ are zero on vector fields $Y$, $Z$, such that 
$\eta(Y)=\eta(Z)=0$. Having this in mind for vector fields $Y$, $Z$ we set $\bar Y=Y-\eta(Y)\xi$, 
$\bar Z=Z -\eta(Z)\xi$ and in virtue of  (\ref{cov:div:pfi}), we have  $g((\nabla_X\phi)\bar Y, \bar Z)=0$. Hence
\begin{equation*}
g((\nabla_X\phi)Y,Z) = \eta(Y)g(\nabla_X\xi, \phi Z)-\eta(Z)g(\nabla_X\xi,\phi Y),
\end{equation*}
and
\begin{equation*}
g(\nabla_X\xi, \phi Z) = \frac{f}{2}\Phi(X,Z)+d\eta(X,\phi Z)- g(X,hZ),
\end{equation*}
 $h=\frac{1}{2} \mathcal L_\xi\phi$. From these last two  identities we obtain our claim. 
\end{proof}

\subsection{Paracontact metric three-dimensional manifolds}
For paracontact metric manifold $f=0$, $d\eta =\Phi$. {\bf Corollary \ref{cov:div:3}} follows
\begin{align}
\label{cov:pfi}
& (\nabla_X\phi)Y = - g(X,Y-hY)\xi +\eta(Y)(X-hX), \\
\label{cov:xi} & \nabla_X\xi =-\phi X + \phi h X. 
\end{align}

\begin{proposition}
\label{prop1:cov:coef}
Let $\mathcal M$ be three-dimensional non para-Sasakian paracontact metric $(\kappa,\mu)$-space.
Let $p$ be a point of $\mathcal M$. Assume there is local vector field $X$, on neighborhood of $p$, that
$\phi X =X$, $h X \neq 0$. Then on neighborhood of $p$ there is a local Artin frame $\xi$, $E_1$, $E_2$, 
function $b$, $\epsilon =\pm 1$, that 
\begin{align}
& \nabla_\xi\xi = 0, \quad \nabla_\xi E_1= -\frac{\mu}{2} E_1, \quad \nabla_\xi E_2 = \frac{\mu}{2} E_2, \\
& \nabla_{E_1}\xi = -E_1-\epsilon E_2, \quad \nabla_{E_1}E_1 = bE_1+\epsilon \xi ,\quad 
\nabla_{E_1}E_2= -bE_2+\xi,  \\ %k_1=1
&  \nabla_{E_2}\xi = \epsilon(\kappa+1) E_1+ E_2, \quad \nabla_{E_2}E_1 = - \xi,\quad 
\nabla_{E_2}E_2= -\epsilon(\kappa+1)\xi,
\end{align}
if $\kappa \neq -1$ function $b$ vanishes.
\end{proposition}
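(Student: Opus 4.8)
The plan is to work entirely in an Artin frame adapted to the given eigenvector $X$, and to extract every connection coefficient from the two structural identities (\ref{cov:pfi}), (\ref{cov:xi}), from metric compatibility in the frame, and from the $(\kappa,\mu)$-curvature condition. First I would build the frame. Since $\mathcal M$ is paracontact metric we have $h\phi+\phi h=0$, so $h$ interchanges the two isotropic eigendistributions $\mathcal V^{+1}$, $\mathcal V^{-1}$ (each one-dimensional here); as $\phi X=X$ and $hX\neq0$, the vector $hX$ is a nonzero section of $\mathcal V^{-1}$. I would take $E_1$ proportional to $X$ and let $E_2$ be the unique section of $\mathcal V^{-1}$ with $g(E_1,E_2)=1$; then $hE_1=g(hE_1,E_1)E_2$ with $g(hE_1,E_1)\neq0$, and the Artin gauge $E_1\mapsto fE_1$, $E_2\mapsto f^{-1}E_2$ rescales this coefficient by $f^2$, so choosing $f=|g(hE_1,E_1)|^{-1/2}$ brings it to a constant sign $\epsilon=\pm1$, i.e. $hE_1=\epsilon E_2$. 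Writing $hE_2=\tau E_1$ for the still unknown function $\tau$, the triple $(\xi,E_1,E_2)$ is the desired Artin frame.

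Next I would read off the vertical part of the connection. Identity (\ref{cov:xi}) gives at once $\nabla_\xi\xi=0$, $\nabla_{E_1}\xi=-E_1-\epsilon E_2$ and $\nabla_{E_2}\xi=\tau E_1+E_2$, while differentiating the constant inner products $g(E_i,E_j)$, $g(\xi,E_i)$ forces $\nabla_\xi E_1=\beta E_1$, $\nabla_\xi E_2=-\beta E_2$ for a single function $\beta$. To pin down $\beta$ and $\tau$ I would compute the Jacobi operator from this data via $R_{E_1\xi}\xi=-\nabla_\xi\nabla_{E_1}\xi-\nabla_{[E_1,\xi]}\xi$, with $[E_1,\xi]=\nabla_{E_1}\xi-\nabla_\xi E_1$. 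Expanding gives $R_{E_1\xi}\xi=(\epsilon\tau-1)E_1-2\beta\epsilon E_2$, whereas the $(\kappa,\mu)$-condition requires $R_{E_1\xi}\xi=\kappa E_1+\mu hE_1=\kappa E_1+\mu\epsilon E_2$. Matching coefficients yields \emph{simultaneously} $\tau=\epsilon(\kappa+1)$ (hence $hE_2=\epsilon(\kappa+1)E_1$ and $h^2=(\kappa+1)\mathrm{Id}$ on $\mathcal D$) and $\beta=-\mu/2$, which is precisely the asserted $\nabla_\xi$- and $\nabla_{E_i}\xi$-data.

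For the horizontal coefficients I would feed the now-known action of $h$ into (\ref{cov:pfi}). Computing each $(\nabla_{E_i}\phi)E_j$ from its definition and comparing with the right-hand side of (\ref{cov:pfi}), together with $g(\nabla_{E_i}E_i,E_i)=0$ and $g(\nabla_{E_i}E_j,\xi)=-g(E_j,\nabla_{E_i}\xi)$, fixes every component except one function per direction: one finds $\nabla_{E_1}E_1=bE_1+\epsilon\xi$, $\nabla_{E_1}E_2=-bE_2+\xi$, $\nabla_{E_2}E_1=pE_1-\xi$, $\nabla_{E_2}E_2=-pE_2-\epsilon(\kappa+1)\xi$ for functions $b,p$.

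The remaining task, and the main obstacle, is to eliminate $b$ and $p$, and this is exactly where the dichotomy in $\kappa$ enters. The $(\kappa,\mu)$-condition forces $R_{E_1E_2}\xi=0$, so by the antisymmetry $g(R_{E_1E_2}E_i,\xi)=-g(R_{E_1E_2}\xi,E_i)=0$ the $\xi$-components of $R_{E_1E_2}E_1$ and $R_{E_1E_2}E_2$ must vanish. I would therefore compute these two horizontal curvature vectors from the coefficients above (using $[E_1,E_2]=\nabla_{E_1}E_2-\nabla_{E_2}E_1$); their $\xi$-components come out proportional to $p$ and to $b(\kappa+1)$ respectively. Their vanishing gives $p=0$ identically and $b(\kappa+1)=0$, so $p=0$ always while $b$ can survive only when $\kappa=-1$, recovering the stated coefficients and the final clause. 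The delicate point throughout is the sign bookkeeping imposed by $h\phi=-\phi h$ and by the indefinite Artin inner product, since a single misplaced sign would spoil the cancellations that produce $\tau=\epsilon(\kappa+1)$ and $b(\kappa+1)=0$.
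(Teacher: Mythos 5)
Your proposal is correct and follows essentially the same route as the paper: gauge a local Artin frame so that $hE_1=\epsilon E_2$, use the $(\kappa,\mu)$-condition on the Jacobi operator $R_{\cdot\,\xi}\xi$ together with (\ref{cov:xi}) and metric compatibility to pin down all coefficients (giving $\beta=-\mu/2$ and $hE_2=\epsilon(\kappa+1)E_1$), and finally impose $R_{E_1E_2}\xi=0$ to force $p=0$ and $b(\kappa+1)=0$. The only cosmetic difference is that the paper first extracts the tensorial identities $h^2=(\kappa+1)(Id-\eta\otimes\xi)$ and $\mu h=-\phi(\nabla_\xi h)$ and reads the same data from them, whereas you match frame components of $R_{E_1\xi}\xi$ directly and obtain $R_{E_1E_2}\xi=0$ via the curvature antisymmetry.
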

\begin{proof}
Essentially proof is finished if we can show that  there is local Artin so coefficients of 
tensor field  $h$ are constants.  By assumption 
manifold is $(\kappa, \mu)$-space.  Therefore Jacobi operator $ X\mapsto J_\xi = R_{X\xi}\xi$, is given by 
$J_\xi = \kappa (Id -\eta\otimes\xi)+\mu h$. From other hand $J_\xi X = \nabla^2_{X,\xi}\xi -\nabla^2_{\xi,X}\xi$. 
With help of (\ref{cov:xi}) we find
\begin{align}
 \nabla^2_{X,\xi}-\nabla^2_{\xi,X}\xi = h^2 X -X+\eta(X)\xi -\phi (\nabla_\xi h)X.
\end{align}
So we obtain equation
\begin{align}
(\kappa+1)(Id-\eta\otimes\xi)+\mu h = h^2 -\phi (\nabla_\xi h).
\end{align}
Due to symmetries of $h$ the above equation splits into pair of separate equations
\begin{align}
& h^2 = (\kappa+1)(Id-\eta\otimes \xi), \\
\label{mu:h::cov:h} & \mu h = -\phi (\nabla_\xi h).
\end{align}
Note $h^2 = a_1a_2 (Id -\eta\otimes \xi)$, where $hE_1=a_1E_2$, 
$hE_2=a_2E_1$.
Hence $a_1a_2 = \kappa+1$. If necessary we gauge frame  to assure  $a_1=\epsilon=\pm 1$, then  
$a_2=\epsilon (\kappa+1)$. 

We are now going back to the proof. By $\nabla_\xi\phi=0$, (\ref{mu:h::cov:h}),  (\ref{cov:xi}), and (\ref{cov:pfi}) 
\begin{align}
& \nabla_\xi\xi =0, \quad \nabla_\xi E_1 =  f E_1, \quad \nabla_\xi E_2 = -fE_2, \\
& \mu E_2 =  \mu hE_1 = -\phi (\nabla_\xi h)E_1 = -2\epsilon f E_2,\\
& \nabla_{E_1}\xi = -\phi E_1+\phi hE_1 =-E_1-\epsilon E_2, \quad \nabla_{E_2}\xi = \epsilon(\kappa+1)E_1+E_2, \\
& \nabla_{E_1}E_1 = b E_1+\epsilon\xi, \quad \nabla_{E_1}E_2= -b E_2+\xi, \\
& \nabla_{E_2}E_1 = -cE_1-\xi, \quad \nabla_{E_2}E_2 = cE_2-\epsilon(\kappa+1)\xi,
\end{align}
with help of all these above formulas we obtain $R_{E_1E_2}\xi=\nabla_{E_1}\nabla_{E_2}\xi -
\nabla_{E_2}\nabla_{E_1}\xi - \nabla_{[E_1,E_2]}\xi = -2b(\kappa+1)E_1-2c E_2$. Therefore 
$c =0$ and $b=0$ if $\kappa \neq -1$.
\end{proof}

\begin{corollary}
\label{pcm:lie:comm}
Let $\mathcal M$ be three-dimensional non-para-Sasakian paracontact  metric $(\kappa,\mu)$-space. Then around 
each point there is a local Artin frame $\xi$, $E_1$, $E_2$, such that
\begin{align}
\label{e1:e2}
& [E_1,E_2]=-b E_2+2\xi, \\
\label{xi:e1:e2} 
& [\xi,E_1]= (1-\frac{ \mu}{2})E_1+\epsilon E_2, \quad [\xi, E_2]=-\epsilon(\kappa+1)E_1-(1-\frac{\mu}{2})E_2,
\end{align}
$b=0$ if $\kappa \neq -1$.
\end{corollary}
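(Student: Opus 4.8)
The plan is to exploit the fact that the Levi-Civita connection is torsion-free, so that for any two vector fields $X$, $Y$ one has $[X,Y]=\nabla_X Y-\nabla_Y X$. Since \textbf{Proposition \ref{prop1:cov:coef}} already furnishes a local Artin frame $(\xi,E_1,E_2)$ together with all nine connection coefficients $\nabla_\bullet\bullet$ expressed in terms of $\mu$, $\kappa$, $b$, and $\epsilon=\pm1$, the corollary reduces to three antisymmetrizations of that data. No new geometric input is needed; the heavy lifting was carried out in the Proposition.

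First I would compute the horizontal bracket
\begin{equation*}
[E_1,E_2]=\nabla_{E_1}E_2-\nabla_{E_2}E_1=(-bE_2+\xi)-(-\xi)=-bE_2+2\xi,
\end{equation*}
which is exactly (\ref{e1:e2}). Next I would treat the two brackets involving the Reeb field. Substituting $\nabla_\xi E_1=-\tfrac{\mu}{2}E_1$ and $\nabla_{E_1}\xi=-E_1-\epsilon E_2$ gives
\begin{equation*}
[\xi,E_1]=\nabla_\xi E_1-\nabla_{E_1}\xi=\Bigl(1-\tfrac{\mu}{2}\Bigr)E_1+\epsilon E_2,
\end{equation*}
and substituting $\nabla_\xi E_2=\tfrac{\mu}{2}E_2$ and $\nabla_{E_2}\xi=\epsilon(\kappa+1)E_1+E_2$ gives
\begin{equation*}
[\xi,E_2]=\nabla_\xi E_2-\nabla_{E_2}\xi=-\epsilon(\kappa+1)E_1-\Bigl(1-\tfrac{\mu}{2}\Bigr)E_2,
\end{equation*}
which together are (\ref{xi:e1:e2}).

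Finally, the assertion that $b=0$ whenever $\kappa\neq-1$ is inherited verbatim from the concluding line of \textbf{Proposition \ref{prop1:cov:coef}}, so nothing further must be proved. Since the whole argument is pure bookkeeping through the torsion-free identity, there is no genuine analytic obstacle here; the only point demanding care is sign consistency in the antisymmetrization $\nabla_X Y-\nabla_Y X$, particularly in the $[\xi,E_2]$ line where the sign on the $(1-\mu/2)E_2$ term must be tracked correctly.
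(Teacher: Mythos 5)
Your proof is correct and is exactly the argument the paper intends: the corollary follows from Proposition \ref{prop1:cov:coef} by antisymmetrizing the connection coefficients via the torsion-free identity $[X,Y]=\nabla_X Y-\nabla_Y X$, with the $b=0$ claim inherited directly from the proposition. All three bracket computations and signs check out against the stated formulas (\ref{e1:e2}) and (\ref{xi:e1:e2}).
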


Conversely having vector fields as in the above corollary define almost paracontact metric structure 
taking $\xi$, $E_1$, $E_2$ as Artin frame. Such defined structure is almost paracosymplectic and directly 
we verify that manifold equipped with this structure is paracontact metric  $(\kappa,\mu)$-space, 
where the case $b\neq 0$ corresponds to $(-1,\mu)$-spaces.

By above corollary every three-dimensional paracontact metric $(\kappa,\mu)$-space, $\kappa\neq -1$, is 
locally isometric as almost paracontact metric manifold  to some three-dimensional connected 
simply connected Lie group equipped with left-invariant almost paracontact metric structure. 
For full list of such groups cf. Table \ref{tab:pcm}. 

\begin{example}[Family of non-isometric paracontact metric $(-1,\mu)$-spaces]
Under the assumptions of the {\bf Proposition \ref{prop1:cov:coef}} sectional 
curvature $K_{\mathcal D}$ of contact distribution of $(-1,\mu)$-space is given by 
\begin{align}
& K_{\mathcal D}: \mathcal D \rightarrow g(R_{E_2E_1}E_1,E_2) = E_2 b-(1+\mu).
\end{align}
Let function $b$ be  that $E_2 b= c$, $c\in \mathbb R$. Solutions 
leading to different values $c_1$, $c_2$ determine non-isometric three-dimensional 
paracontact $(-1,\mu)$-space $\mathcal M_1$, $\mathcal M_2$ as 
$K_{\mathcal D_1} = const. $, $K_{\mathcal D_2}=const.$, and 
$K_{\mathcal D_1} \neq K_{\mathcal D_2}$. Only what remains is to show that 
such solutions exist. By {\bf Corollary \ref{pcm:lie:comm}}, around every 
point there is local coordinate system $(t,x,y)$, that 
\begin{align}
& \xi = \partial_t, \quad E_2 = e^{-(1-\frac{\mu}{2})t}\partial_x, 
\end{align}
we set $b=b(t,x,y)= (K_{\mathcal D}+(1+\mu))e^{(1-\frac{\mu}{2})t}x$.
\end{example}

\subsection{Paracosymplectic $(\kappa,\mu)$-spaces}
For paracosymplectic manifolds
\begin{align}
\label{pcos:cov:pfi}
& (\nabla_X\phi)Y = g(X,hY)\xi-\eta(Y)hX, \\
\label{pcos:cov:xi}
& \nabla_X\xi = \phi hX.
\end{align}

\begin{proposition}
Let $\mathcal M$ be three-dimensional almost paracosymplectic non paraco\-symplectic $(\kappa,\mu)$-space.
Let $p$ be a point of $\mathcal M$. Assume there is local vector field $X$, on neighborhood of $p$, that
$\phi X =X$, $h X \neq 0$. Then on neighborhood of $p$ there is a local Artin frame $\xi$, $E_1$, $E_2$, 
function $b$, $\epsilon =\pm 1$, that
\begin{align}
& \nabla_\xi\xi = 0, \quad \nabla_\xi E_1= -\frac{\mu}{2} E_1, \quad \nabla_\xi E_2 = \frac{\mu}{2} E_2, \\
& \nabla_{E_1}\xi = -\epsilon E_2, \quad \nabla_{E_1}E_1 =  b E_1+\epsilon \xi ,
\quad \nabla_{E_1}E_2= -b E_2,  \\ %k_1=1
&  \nabla_{E_2}\xi =\epsilon \kappa E_1, \quad \nabla_{E_2}E_1 = 0,\quad \nabla_{E_2}E_2= -\epsilon\kappa\xi,
\end{align} 
if $\kappa \neq 0$, function $b$ vanishes $b=0$.
\begin{proof}
Proof goes on the same way as proof of {\bf Proposition \ref{prop1:cov:coef}}. The first we  
obtain for almost paracosymplectic $(\kappa,\mu)$-space
\begin{align}
& h^2 = \kappa (Id -\eta\otimes\xi), \\
& \mu h = -\phi(\nabla_\xi h),
\end{align}
so there is Artin frame $hE_1 = \epsilon E_2$, $hE_2 = \epsilon\kappa E_1$, $\epsilon =\pm 1$. 
Now we use (\ref{pcos:cov:pfi}), (\ref{pcos:cov:xi}) 
to obtain connection coefficients, finally we apply integrability condition $R_{E_1E_2}\xi =0$. 
\end{proof}
\end{proposition}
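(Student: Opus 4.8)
The plan is to follow the proof of Proposition~\ref{prop1:cov:coef} closely in structure, substituting the almost paracosymplectic identities (\ref{pcos:cov:pfi})--(\ref{pcos:cov:xi}) for the paracontact ones (\ref{cov:pfi})--(\ref{cov:xi}). First I would turn the $(\kappa,\mu)$-condition into an algebraic constraint on $h$. The Jacobi operator equals $J_\xi=\kappa(Id-\eta\otimes\xi)+\mu h$, while on the other hand $J_\xi X=\nabla^2_{X,\xi}\xi-\nabla^2_{\xi,X}\xi$. Substituting $\nabla_X\xi=\phi hX$ from (\ref{pcos:cov:xi}) and using $\nabla_\xi\phi=0$ (immediate from (\ref{pcos:cov:pfi}) since $h\xi=0$ and $h$ is self-adjoint), together with $h\phi+\phi h=0$ and $\phi^2=Id-\eta\otimes\xi$, I would compute $\nabla^2_{X,\xi}\xi=-(\phi h)^2X=h^2X$ and $\nabla^2_{\xi,X}\xi=\phi(\nabla_\xi h)X$. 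This produces the operator identity $\kappa(Id-\eta\otimes\xi)+\mu h=h^2-\phi(\nabla_\xi h)$.

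The decisive step is to split this identity. On $\mathcal D=\{\eta=0\}$ the affinor $\phi$ is an involution, so every endomorphism decomposes uniquely into a $\phi$-commuting and a $\phi$-anticommuting part. Since $h$ anticommutes with $\phi$, so do $\nabla_\xi h$ and hence $\phi(\nabla_\xi h)$, whereas $h^2$ and $\kappa(Id-\eta\otimes\xi)$ commute with $\phi$; matching the two parts yields
\begin{align*}
& h^2=\kappa(Id-\eta\otimes\xi), \\
& \mu h=-\phi(\nabla_\xi h).
\end{align*}
Writing $hE_1=a_1E_2$, $hE_2=a_2E_1$ in an Artin frame adapted to the eigendistributions $\mathcal V^{\pm}$ of $\phi$, the first identity gives $a_1a_2=\kappa$; the hypothesis $hX\neq0$ with $\phi X=X$ forces $a_1\neq0$, so a gauge of the Artin frame normalizes $a_1=\epsilon=\pm1$ (locally constant) and then $a_2=\epsilon\kappa$.

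With $hE_1=\epsilon E_2$, $hE_2=\epsilon\kappa E_1$ fixed, I would read off the connection. Formula (\ref{pcos:cov:xi}) gives $\nabla_\xi\xi=0$, $\nabla_{E_1}\xi=-\epsilon E_2$, $\nabla_{E_2}\xi=\epsilon\kappa E_1$; since $\nabla_\xi$ preserves $\phi$ and $g$ it acts diagonally on $\mathcal V^{\pm}$, and the relation $\mu h=-\phi(\nabla_\xi h)$ pins down the eigenvalue to give $\nabla_\xi E_1=-\tfrac{\mu}{2}E_1$, $\nabla_\xi E_2=\tfrac{\mu}{2}E_2$; finally (\ref{pcos:cov:pfi}) together with compatibility of $\nabla$ with the isotropic pairing $g(E_1,E_2)=1$ determines the rest up to two free functions, $\nabla_{E_1}E_1=bE_1+\epsilon\xi$, $\nabla_{E_1}E_2=-bE_2$, $\nabla_{E_2}E_1=cE_1$, $\nabla_{E_2}E_2=-cE_2-\epsilon\kappa\xi$. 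The last step is to impose $R_{E_1E_2}\xi=0$, which holds because $\mathcal M$ is a $(\kappa,\mu)$-space. Expanding the curvature with these coefficients gives $R_{E_1E_2}\xi=2\epsilon\kappa\,b\,E_1-2\epsilon\,c\,E_2$, whence $c=0$ always and $b=0$ whenever $\kappa\neq0$, which is precisely the claimed frame.

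I expect the main obstacle to be bookkeeping in the last paragraph rather than any conceptual difficulty: because $E_1$ and $E_2$ are null with $g(E_1,E_2)=1$, the usual orthonormal shortcuts are unavailable when solving the Koszul relations, and one must verify that $\epsilon$ and $\kappa$ are genuinely constant along the frame so that the computation of $\nabla_\xi h$ feeding into $\mu h=-\phi(\nabla_\xi h)$ is legitimate. The only structural difference from the paracontact case is the absence of the $+1$ shift, so that $h^2=\kappa(Id-\eta\otimes\xi)$ replaces $h^2=(\kappa+1)(Id-\eta\otimes\xi)$ and the curvature obstruction becomes $\kappa b=0$ in place of $(\kappa+1)b=0$.
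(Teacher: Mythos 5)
Your proposal is correct and follows essentially the same route as the paper, which itself simply transplants the proof of Proposition~\ref{prop1:cov:coef}: the Jacobi-operator identity split into $h^2=\kappa(Id-\eta\otimes\xi)$ and $\mu h=-\phi(\nabla_\xi h)$, the gauge-normalized Artin frame with $hE_1=\epsilon E_2$, $hE_2=\epsilon\kappa E_1$, the connection coefficients read off from (\ref{pcos:cov:pfi})--(\ref{pcos:cov:xi}), and finally $R_{E_1E_2}\xi=0$ forcing $c=0$ and $b=0$ when $\kappa\neq 0$. You in fact supply details the paper leaves implicit (the commuting/anticommuting splitting argument and the explicit curvature expression $2\epsilon\kappa b\,E_1-2\epsilon c\,E_2$), and these check out.
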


\begin{corollary}
\label{lie:comm2}
Let $\mathcal M$ be three-dimensional almost paracosymplectic $(\kappa,\mu)$-space. Then around 
each point there is a local Artin frame $\xi$, $E_1$, $E_2$, such that
\begin{align}
\label{pcos:e1:e2}
& [E_1,E_2]=-b E_2, \\
\label{pcos:xi:e1:e2} 
& [\xi,E_1]= -\frac{\mu}{2}E_1+\epsilon E_2, \quad [\xi, E_2]=-\epsilon\kappa E_1+\frac{\mu}{2}E_2,
\end{align}
if $\kappa \neq 0$, $b=0$.
\end{corollary}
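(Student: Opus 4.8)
The plan is to read off the bracket relations directly from the connection coefficients established in the Proposition above, using nothing beyond the fact that the Levi-Civita connection is torsion-free. Because $\nabla$ has vanishing torsion, each Lie bracket of frame vectors is recovered as $[X,Y] = \nabla_X Y - \nabla_Y X$, so the whole computation collapses to subtracting the relevant pairs of coefficients. No new geometry enters at this stage; the Corollary is simply the torsion-free repackaging of the data already produced.

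Concretely, I would record the three brackets among $\xi$, $E_1$, $E_2$ one at a time. For $[E_1,E_2]$ I subtract $\nabla_{E_2}E_1 = 0$ from $\nabla_{E_1}E_2 = -bE_2$, obtaining $[E_1,E_2] = -bE_2$ and confirming (\ref{pcos:e1:e2}). For $[\xi,E_1]$ I subtract $\nabla_{E_1}\xi = -\epsilon E_2$ from $\nabla_\xi E_1 = -\frac{\mu}{2}E_1$, which yields $-\frac{\mu}{2}E_1 + \epsilon E_2$; and for $[\xi,E_2]$ I subtract $\nabla_{E_2}\xi = \epsilon\kappa E_1$ from $\nabla_\xi E_2 = \frac{\mu}{2}E_2$, giving $-\epsilon\kappa E_1 + \frac{\mu}{2}E_2$. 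These reproduce (\ref{pcos:xi:e1:e2}) verbatim.

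Finally, the dichotomy on $b$ is inherited rather than re-proved: the Proposition above already shows that the function $b$ vanishes whenever $\kappa \neq 0$, and this conclusion transfers immediately to the bracket (\ref{pcos:e1:e2}). In this sense there is no genuine obstacle at the level of the Corollary itself. All of the analytic content — solving $h^2 = \kappa(Id - \eta\otimes\xi)$ together with $\mu h = -\phi(\nabla_\xi h)$ to fix the Artin frame, feeding the result through (\ref{pcos:cov:pfi}) and (\ref{pcos:cov:xi}), and then imposing the integrability condition $R_{E_1E_2}\xi = 0$ to pin the coefficients down — was carried out in the Proposition. The only point demanding care is bookkeeping: staying consistent about sign conventions and about which coefficient belongs to which $\nabla_{E_i}E_j$, since a single transposition would send a term into the wrong bracket.
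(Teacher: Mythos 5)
Your proposal is correct and coincides with the paper's own (implicit) argument: the paper states this Corollary without a separate proof precisely because it is the torsion-free reformulation $[X,Y]=\nabla_XY-\nabla_YX$ of the connection coefficients in the preceding Proposition, and your three bracket computations and the inheritance of the dichotomy $b=0$ for $\kappa\neq 0$ reproduce exactly that derivation.
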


Conversely having vector fields as in the above corollary define almost paracontact metric structure 
taking $\xi$, $E_1$, $E_2$ as Artin frame. Such defined structure is almost paracosymplectic and directly 
we verify that manifold equipped with this structure is almost paracosymplectic $(\kappa,\mu)$-space, 
where the case $b\neq 0$ corresponds to $(0,\mu)$-spaces.

Corollary above allow us to create full list of three-dimensional Lie groups admitting structure of paracosymplectic 
$(\kappa,\mu)$-spaces, $\kappa \neq 0$, up to $\mathcal D$-homothety, cf. Table \ref{tab:pcos}.

\begin{example}[Family of non-isometric $(0,\mu)$-spaces] 
For three-dimensional almost paracosymplectic $(0,\mu)$-space sectional curvature $K_{\mathcal D}$ is given by
\begin{align}
K_{\mathcal D}: \mathcal D \rightarrow g(R_{E_2 E_1}E_1,E_2) = E_2b.
\end{align}
If $\mathcal M_1$, $\mathcal M_2$, 
are two almost paracosymplectic $(0,\mu)$-spaces and $K_{\mathcal D_1} = const.$, $K_{\mathcal D_2}= const.$, 
$K_{\mathcal D_1}\neq K_{\mathcal D_2}$, such spaces are non-isometric as almost 
paracontact metric manifolds. For every isometry preserving Reeb fields must preserve sectional 
curvature of almost contact distributions. For every constant $c\in \mathbb R$, there is manifold 
 with $K_{\mathcal D}=c$. We just need to solve equations
\begin{align}
\xi b = -\frac{\mu}{2}, \quad E_2b =c, 
\end{align}
by {\bf Corollary \ref{lie:comm2}}, there is local coordinate system, such that $\xi =\partial_t$, 
$E_2 = e^{\frac{\mu}{2}t}\partial_x$, so one of possible solution is $b= K_{\mathcal D}\,e^{-\frac{\mu}{2}t}x$.
\end{example}

\newpage

 \begin{table}[h]
\begin{tabular}{|c c c| }
\hline
& & \\[-6pt]
 \small{ Lie group}
  & \small{Description} & 
  \small{ $I_{\mathcal M}=\frac{1-\mu/2}{\sqrt{1-\kappa}}$} \\%[+2pt]
  \hline
 & &  \\[-6pt]
 \Small{$ \mathcal {SO}(3)$ or $\mathcal {SU}(2)$} & 
 \Small{simple, compact, $\mathbb S^3$ or 
 $\mathbb S^{3}/\{\pm 1\}$} & \Small{$I_{\mathcal M} > 1$ } \\%[+2pt]
 \Small{$\mathcal {SL}(2,\mathbb R)$ or $\mathcal O(1,2)$} 
  & 
 \Small{simple, $\mathbb R^3$, compact quotients, 
 \cite{AusGrHa}}  &
 \Small{$I_{\mathcal M} < 1$, $I_{\mathcal M} \neq -1$} \\%[+2pt]
\Small{$\mathcal E(2)$} & 
 \Small{solvable, $\mathbb R^3$, compact quotients}  & 
 \Small{ $I_{\mathcal M} = 1$ } \\%[+2pt]
 \Small{$\mathcal E(1,1)$}   & 
 \Small{solvable, $\mathbb R^3$, compact quotients, 
 \cite{AusGrHa} } &
 \Small{$I_{\mathcal M} = -1$} \\%[+2pt]
  \hline
\end{tabular}
\vspace{.1cm}
\caption{\Small{Three-dimensional Lie groups with left-invariant, non-Sasakian 
contact metric $(\kappa,\mu)$-structures.}}
\label{tab:cm}
\end{table}

 \begin{table}[h]
\begin{tabular}{|c c c| }
\hline
& & \\[-6pt]
  \small{ Lie group}
  & \small{Description} & 
  \small{ $E_{\mathcal M}=\frac{(1-\mu/2)^2}{1+\kappa}$} \\%[+2pt]
  \hline
 & &  \\[-6pt]
 \Small{$ \mathcal {SO}(3)$ or $\mathcal {SU}(2)$} & 
 \Small{simple, compact, $\mathbb S^3$ or 
 $\mathbb S^{3}/\{\pm 1\}$} & \Small{$0< E_{\mathcal M} < 1$}\\%[+2pt]
  \Small{-} & 
 \Small{-} & \Small{$E_{\mathcal M}=0$ and $\kappa > -1$ } \\%[+2pt]
  \Small{$\mathcal {SL}(2,\mathbb R)$ or $\mathcal O(1,2)$} 
  & 
 \Small{simple, $\mathbb R^3$, compact quotients, 
 }  &
 \Small{$E_{\mathcal M} < 0$, $E_{\mathcal M} > 1$} \\%[+2pt]
\Small{-} & \Small{-} & \Small{$E_{\mathcal M}=0$, $\kappa < -1$} \\ %[+2pt]
 \Small{$E(2)$ or $E(1,1)$} &
\Small{solvable, $\mathbb R^3$, compact quotients,} &
\Small{ $E_{\mathcal M}=1$} \\%[+2pt]
  \hline
\end{tabular}
\vspace{.1cm}
\caption{\Small{Three-dimensional Lie groups with left-invariant, non para-Sasakian 
paracontact metric $(\kappa,\mu)$-structures, $\kappa\neq -1$.}}
\label{tab:pcm}
\end{table}

\begin{table}[h]
\begin{tabular}{|c c c| }
\hline
& & \\[-6pt]
 \small{ Lie group}
  & \small{Description} & 
  \small{$C_{\mathcal M}=\frac{-\mu/2}{\sqrt{-\kappa}}$} \\%[+2pt]
  \hline
 & &  \\[-6pt]
 \Small{$\mathcal E(2)$} & 
 \Small{solvable, $\mathbb R^3$, compact quotients}  & 
 \Small{ $|C_{\mathcal M}| > 1$ } \\%[+2pt]
 \Small{$\mathcal E(1,1)$}   & 
 \Small{solvable, $\mathbb R^3$, compact quotients } &
 \Small{$|C_{\mathcal M}| <1$} \\%[+2pt]
\Small{Heisenberg Lie group $\mathbb H^3$} & 
 \Small{nilpotent, $\mathbb R^3$, compact quotients}  & 
 \Small{ $|C_{\mathcal M}| = 1$ } \\%[+2pt]
  \hline
\end{tabular} 
\vspace{.1cm}
\caption{\Small{Three-dimensional Lie groups with left-invariant, non-cosymplectic 
 almost cosymplectic $(\kappa,\mu)$-structures, $\kappa < 0$.}}
\label{tab:acos}
\end{table}

\begin{table*}[h]
%\label{tab:pcos}
\begin{tabular}{|c c c| }
\hline
& & \\[-6pt]
 \small{ Lie group}
  & \small{Description} & 
  \small{$F_{\mathcal M}=\frac{(-\mu/2)^2}{\kappa}$} \\%[+2pt]
  \hline
 & &  \\[-6pt]
 \Small{$\mathcal E(2)$} & 
 \Small{solvable, $\mathbb R^3$, compact quotients}  & 
 \Small{ $0< F_{\mathcal M} < 1$ } \\%[+2pt]
\Small{-} & 
 \Small{-}  & 
 \Small{ $F_{\mathcal M} =0$, $\kappa>0$ } \\%[+2pt]
 \Small{$\mathcal E(1,1)$}   & 
 \Small{solvable, $\mathbb R^3$, compact quotients } &
 \Small{$|F_{\mathcal M}| >1$} \\%[+2pt]
\Small{-}   & 
 \Small{- } &
 \Small{$F_{\mathcal M} =0$, $\kappa < 0$} \\%[+2pt]
\Small{Heisenberg Lie group $\mathbb H^3$} & 
 \Small{nilpotent, $\mathbb R^3$, compact quotients}  & 
 \Small{ $F_{\mathcal M} = 1$ } \\%[+2pt]
  \hline
\end{tabular} 
\vspace{.1cm}
\caption{\Small{Three-dimensional Lie groups with left-invariant, non paracosymplectic 
 almost paracosymplectic $(\kappa,\mu)$-structures, $\kappa\neq 0$.}}
\label{tab:pcos}
\end{table*}

\end{document}